\numberwithin{equation}{section}
\theoremstyle{plain} 
\newtheorem{thm}{Theorem}[section] 
\newtheorem{cor}[thm]{Corollary} 
\newtheorem{lem}[thm]{Lemma} 
\newtheorem{prop}[thm]{Proposition} 
\newtheorem{rmk}[thm]{Remark} 
\newtheorem{dfn}[thm]{Definition}
\definecolor{custom-blue}{RGB}{0,99,166} 
\def\t{\tau}
\begin{document}

\author{Authors}

\author{$\text{\sc{Antonio Giuseppe Grimaldi}}^\clubsuit$ \sc{and} $\text{\sc{Stefania Russo}}^\spadesuit$}

\title{{Weak comparison principle for widely degenerate elliptic equations}}

\maketitle
\maketitle

\begin{abstract}
We prove a comparison principle for local weak solutions to a class of widely degenerate elliptic equations of the form
\begin{equation}
     -\text{div} \left(  \left(|Du|-1 \right)^{p-1}_+\frac{Du}{|Du|} \right) = f(x,u) \qquad \text{ in    } \Omega,\notag
 \end{equation}
  where $p \ge 2$ and $\Omega$ is an open subset of $\mathbb{R}^{n}$, $n\geq2$.
 Moreover, we establish some second order regularity results of the solutions, {that yields a weighted Sobolev inequality with widely degenerate weights.} 
\end{abstract}

\medskip
\noindent \textbf{Keywords:} {{Weak comparison principle; degenerate elliptic equations; second order regularity.}}
\medskip \\
\noindent \textbf{MSC 2020:} {{35B51; 35J70.}}

\let\thefootnote\relax\footnotetext{
			\small $^{\clubsuit}$Dipartimento di Ingegneria, Università degli Studi di Napoli ``Parthenope'',
Centro Direzionale Isola C4, 80143 Napoli, Italy. E-mail: \textit{antoniogiuseppe.grimaldi@collaboratore.uniparthenope.it}}

\let\thefootnote\relax\footnotetext{
			\small $^{\spadesuit}$Dipartimento di Matematica e Applicazioni ``R. Caccioppoli'', Università degli Studi di Napoli ``Federico II'', Via Cintia, 80126 Napoli,
 Italy. E-mail: \textit{stefania.russo3@unina.it}}

 \section{Introduction} 
 {In this paper, we consider local weak solutions to widely degenerate elliptic equations of the form
 \begin{equation}\label{Functional}
     -\text{div} \left(  \left(|Du|-1 \right)^{p-1}_+\frac{Du}{|Du|} \right) = f(x,u) \qquad \text{ in    } \Omega,
 \end{equation}
 where $p \ge 2$, $\Omega$ is an open subset of $\mathbb{R}^{n}$, $n\geq2$, $(\,\cdot\,)_{+}$
stands for the positive part. We assume that, for every open set $\Omega' \Subset \Omega$ and every $K>0$, there exist $L,M$ positive constants such that the right-hand side $f: \Omega \times \mathbb{R} \to \mathbb{R}$ satisfies
\begin{equation}\label{F1}
  |f(x,s)-f(x,t)|\le L |s-t|  \tag{F1}
\end{equation}
\begin{equation}\label{F2}
  |f(x,s)-f(y,s)|\le M |x-y|  \tag{F2}
\end{equation}
for all $x,y \in \Omega'$ and every $|s|,|t| \le K$. {As an example of right-hand side, we can consider $f(x,u)=h(x)g(u)$, with $h,g$ both Lipschitz continuous.}

Here, we prove some regularity results for Lipschitz continuous solutions $u$ to \eqref{Functional}, such as second order regularity results of $Du$ and summability properties of the inverse of the weight ${(|Du|-1)_+^{p-1}}$. Exploiting these results, then we are able to establish a weak comparison principle for elliptic equations as in \eqref{Functional}, under suitable assumptions on the right-hand side $f$.

Ciraolo \cite{Ciraolo} proved a weak comparison principle for weak solutions to \eqref{Functional} assuming a priori that $H_{p-1}(Du) \in W^{1,1}(\Omega)$. This global higher differentiability result yields a weak comparison principle in $\Omega$ {and then the uniqueness of the solution}. Since in this paper we deal with local regularity results, we obtain a comparison principle on every open set $\Omega' \Subset \Omega$. In order to extend the comparison principle to the whole $\Omega$, we need to assume that the degeneracy set does not touch the boundary of $\Omega$.

The peculiarity of equation \eqref{Functional}
is that it is uniformly elliptic only outside the ball centered at
the origin with radius $1$, where its principal part behaves
asymptotically as the classical $p$-Laplace operator. Therefore,
the study of such an equation fits into the wider class of the \textit{asymptotically
regular problems}, that were first studied by Chipot and Evans in \cite{CE}. 

One of the main motivations to the study of widely degenerate equations comes from the applications. Indeed, the equation at \eqref{Functional} naturally arises as a model for optimal transport problems with congestion effects.
 We refer to \cite{Bra1,Bra,CJS} and reference therein for a
detailed derivation of the model.

The regularity of weak solutions to \eqref{Functional}, with $f=f(x)$, has been an active field of research in recent years. In \cite{brasco1,Bra,clop}
Lipschitz regularity of solutions has been established under suitable
assumptions on the right-hand side $f$.
In general, no more than Lipschitz regularity can be expected for solutions of equations as in \eqref{Functional}. Indeed, when $f=0$, every $1$-Lipschitz continuous function solves \eqref{Functional}. On the other hand, in \cite{BoDuGiPa,Gri} it was shown that solutions to \eqref{Functional} are of class $\mathcal{C}^1$ outside the degeneracy set.
The higher differentiability of the gradient of solutions is obtained for a datum ${f=f(x)}$ in \cite{Am,AGPdN,Bra,clop, Russo}.

{Much more recently}, in \cite{Ciraolo}, it has been proved a weak comparison principle for Lipschitz continuous solutions to \eqref{Functional}, assuming that the right-hand side $f=f(x)$ is such that the measure of the set where $f$ vanishes is zero. This guarantees that the measure of the degeneracy set is zero.

In \cite{Damascelli} the authors proved a $W^{2,2}$-regularity for solutions to the less degenerate $p$-Laplace equation
$$- \text{div} (|Du|^{p-2}Du)=f(u),$$
{that}, adding a sign assumption on $f$, yields a comparison principle{on small domains} for the solutions, {which does not guarantee the uniqueness of the solution}. For
a list of results on this subject,
we refer to \cite{Riey,Esposito,Montoro,Riey1,Sciunzi,Sciunzi1} and references therein.

{Chipot \cite{ch} studied operators with a $p$-Laplacian structure of the type
\begin{equation}
    - \text{div} \left( \sum_{i=1}^n a_i(x,u)|Du|^{p_i-2}Du \right)=f(x). \label{eqch}
\end{equation}
Assuming that the coefficients $a_i(x,u)$ are continuous and Lipschitz continuous in the $u$-variable, then the solution to \eqref{eqch} is unique.}

As far as we know, there are no regularity results for equations as in \eqref{Functional}, when the datum $f$ depends not only on $x$, but also on $u$. 
Here, the main novelty is to consider widely degenerate equations in which the right-hand side allows an explicit dependence on the solution. 

{It is worth noticing that the equation at \eqref{Functional} is not only degenerate, but in some aspects it behaves as an equation with non-standard growth. Actually, introducing the ellipticity ratio as 
$$\mathcal{R}(\xi):= \dfrac{ \text{the highest eigenvalue of } DH_{p-1}(\xi)}{\text{the lowest eigenvalue of } D H_{p-1}(\xi)}$$
{(for the definition of the function $H_{p-1}$ see \eqref{eq:Hfun}), we have,
by virtue of Lemma \ref{lemH}, that $$\mathcal{R}(\xi) \approx \dfrac{|\xi|}{(|\xi|-1)_+} ,$$
which is clearly unbounded for $|\xi| \to 1$. Hence,
the arguments used in \cite{Damascelli} and the subsequent papers do not seem to work in our case.}

The proof of the weak comparison principle is divided into several steps, consisting in proving that Lipschitz continuous solutions $u$ to \eqref{Functional} satisfy suitable regularity properties, that are of some interest in themselves. \\First, we establish a local higher differentiability result for the function $H_{p-1}(Du)$ (see Section \ref{Pre} for the definition), that allows to differentiate the equation \eqref{Functional} and then to prove that $D^2u$ belongs to a weighted Sobolev space. Next, assuming that $f$ is a positive function, we obtain some integrability results for the inverse of $(|Du|-1)_+$, {and this allows us to prove that the measure of the degeneracy set is zero, i.e.}
$$|\{ x \in \Omega' : |Du| \le 1 \}|=0 \qquad \text{ for any } \Omega' \Subset \Omega.$$
Eventually, using this property of the degeneracy set, we show the validity of the weak comparison principle.}

 \section{Notation and Preliminary Results}\label{Pre}
 In what follows, $C$ or $c$ will denote a general constant that may vary on different occasions (even within the same line of estimates). The dependencies on parameters and special constants will be emphasized using either parentheses or subscripts. The norm we use on $\mathbb{R}^k$, $k \in \mathbb{N}$, will be the standard Euclidean ones and denoted by $|\cdot|$. For $x\in\mathbb{R}^n$ and $r>0$, the symbol $B(x,r)=B_r(x)$ denotes the ball of radius $r$ and center $x$. If the center $x$ is not relevant we will omit it and write only $B_r$.

 For further needs, we
introduce the auxiliary function $H_{\gamma}:\mathbb{R}^{n}\rightarrow\mathbb{R}^{n}$
defined by 
\begin{equation}
H_{\gamma}(\xi):=\begin{cases}
\begin{array}{cc}
(\vert\xi\vert-1)_{+}^{\gamma}\,\frac{\xi}{\left|\xi\right|} & \,\,\mathrm{if}\,\,\,\xi\neq0,\\
0 & \,\,\mathrm{if}\,\,\,\xi=0.
\end{array}\end{cases}\label{eq:Hfun}
\end{equation}

We recall the definition of local weak solution.
\begin{dfn}
    A function $u \in W^{1,p}_{loc}(\Omega) $ is a local weak solution to \eqref{Functional} if, for every $\tilde{\Omega} \Subset \Omega$, it holds 
    \begin{equation}\label{Weaksol}
        \int_{{\Omega}} \langle H_{p-1} (D u), D \varphi \rangle \, dx \,= \, \int_{{\Omega}} f(x,u) \varphi \, dx,
    \end{equation}
    for  any test function $\varphi \in W^{1,p}_0 (\tilde{\Omega})$.
    \\
\end{dfn}

\subsection{Algebraic Inequality}\label{secai}
In this section, we state the relevant algebraic inequality that will be needed
later on.

\begin{lem}\label{alglem}
    Let $p \ge 2$. Then, for any $a ,b> 0$, we have
    $$\dfrac{(a-1)_+^p+(b-1)_+^p}{a^2+b^2} \ge \dfrac{(a-1)_+^p}{4a^2}+\dfrac{(b-1)_+^p}{4b^2}.$$
\end{lem}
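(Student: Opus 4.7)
The plan is to clear denominators and reduce the claim to a polynomial inequality which is handled by case analysis on the positions of $a$ and $b$ with respect to $1$ and to each other. Setting $A:=(a-1)_+^p\ge 0$ and $B:=(b-1)_+^p\ge 0$ and multiplying the desired inequality by $4a^2b^2(a^2+b^2)>0$, it becomes equivalent to
\begin{equation*}
A\,b^2(3a^2-b^2)+B\,a^2(3b^2-a^2)\ \ge\ 0,
\end{equation*}
which is my target inequality.

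The easy cases are those in which at least one of $A,B$ vanishes. If $a\le 1$ then $A=0$, and either $b\le 1$ as well, in which case $B=0$ and everything is trivial, or $b>1\ge a$, and the surviving term has coefficient $3b^2-a^2>3-1>0$. The case $b\le 1$ is symmetric. I may therefore assume $a,b>1$ and, by the symmetry of the target inequality, that $a\ge b$. In this regime $3a^2-b^2\ge 2a^2>0$ automatically; and if moreover $a\le\sqrt{3}\,b$, then $3b^2-a^2\ge 0$ as well, so both summands are non-negative and I am done.

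The only genuine case is $a>\sqrt{3}\,b$, in which the second summand is negative and must be absorbed by the first. The key ingredient will be the elementary comparison
\begin{equation*}
\frac{a-1}{b-1}\ \ge\ \frac{a}{b}\qquad\text{whenever }a\ge b>1,
\end{equation*}
which follows from $(a-1)b-(b-1)a=a-b\ge 0$. Combined with $p\ge 2$ and $a/b>1$, this yields
\begin{equation*}
\frac{A}{B}\ =\ \left(\frac{a-1}{b-1}\right)^{p}\ \ge\ \left(\frac{a}{b}\right)^{p}\ \ge\ \left(\frac{a}{b}\right)^{2}.
\end{equation*}
Using $A\ge B\,a^2/b^2$ in the positive term, I would then conclude
\begin{equation*}
A\,b^2(3a^2-b^2)\ \ge\ B\,a^2(3a^2-b^2)\ \ge\ B\,a^2(a^2-3b^2),
\end{equation*}
where the last inequality reduces to $2(a^2+b^2)\ge 0$. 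I expect the mildly subtle point to be precisely the passage from the ordering of $(a,b)$ to that of $(A,B)$: it is a one-line verification, but it is the mechanism which, thanks to $p\ge 2$, transfers the size ordering strongly enough to beat the negative contribution.
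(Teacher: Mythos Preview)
Your proof is correct. The algebraic reduction to
\[
A\,b^2(3a^2-b^2)+B\,a^2(3b^2-a^2)\ \ge\ 0
\]
is clean, the case analysis is complete, and the key estimate $A/B\ge(a/b)^2$ for $a\ge b>1$ is exactly what is needed to absorb the negative term in the only genuinely interesting regime $a>\sqrt{3}\,b$.

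The paper's argument is organised differently. Rather than clearing denominators, it observes that for $1<a\le b$ one has $a^2+b^2\le 2b^2$, so
\[
\frac{(a-1)^p+(b-1)^p}{a^2+b^2}\ \ge\ \frac{(b-1)^p}{2b^2},
\]
and then shows by differentiating $g(t)=(t-1)_+^p/t^2$ that $g$ is non-decreasing for $t>1$ when $p\ge 2$; hence $(b-1)^p/(2b^2)\ge(a-1)^p/(2a^2)$ as well, and summing gives the claim with the factor $1/4$. Your key comparison $(a-1)/(b-1)\ge a/b$ combined with $p\ge 2$ is precisely a hands-on proof of the monotonicity of $g$, so the two arguments share the same core idea. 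What differs is the packaging: the paper bounds the left-hand side from below by the \emph{larger} of the two target terms (divided by $2$), avoiding any splitting into the subcases $a\le\sqrt{3}\,b$ versus $a>\sqrt{3}\,b$, whereas you clear denominators and handle a signed polynomial expression directly. The paper's route is a bit shorter and more conceptual; yours is more elementary and makes the role of the hypothesis $p\ge 2$ fully transparent at the inequality level.
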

\proof If $0 < a \le 1$ or $0 < b \le 1$, then the claim immediately follows.
\\ Now, let $a,b>1$. We may assume that $1 < a \le b$, otherwise we interchange the role of
$a$ and $b$. Since $a^2+b^2 \le 2 b^2$, we have
$$\frac{1}{a^2+b^2} \ge \frac{1}{2 b^2},$$
which implies
\begin{equation}
    \dfrac{(a-1)^p+(b-1)^p}{a^2+b^2} \ge \dfrac{(a-1)^p+(b-1)^p}{2b^2} \ge \dfrac{(b-1)^p}{2b^2} .\label{alineq}
\end{equation}
Consider the function
$$g(t)=\dfrac{(t-1)_+^p}{t^2}, \qquad t > 0.$$
Since $p \ge 2$, then, for $t>1$, we get
$$g'(t)= \dfrac{(t-1)^{p-1}[(p-2)t+2])}{t^3} > 0.$$
Hence, by \eqref{alineq} and the monotonicity of the function $t \in (1,\infty) \mapsto g(t)$, we deduce that
\begin{equation}
    \dfrac{(a-1)^p+(b-1)^p}{a^2+b^2} \ge\dfrac{(b-1)^p}{2b^2} \ge \dfrac{(a-1)^p}{2a^2} .\label{alineq1}
\end{equation}
Combining \eqref{alineq} and \eqref{alineq1}, we get
\begin{equation*}
   2 \, \dfrac{(a-1)^p+(b-1)^p}{a^2+b^2} \ge\dfrac{(b-1)^p}{2b^2} +\dfrac{(a-1)^p}{2a^2} .\label{alineq2}
\end{equation*}
This completes the proof.
\endproof


\subsection{Properties of the Vector Field $H_{p-1}$ }

For the function
$H_{p-1}$ defined by (\ref{eq:Hfun}) with $\gamma=p-1$, we record
the following estimates (see \cite[Lemma 4.1]{Bra}).
\begin{lem}\label{lemmaHalpha2}
    If $ \, 2 \le p < \infty$, then the following estimates 
    \begin{equation}
        \langle H_{p-1}(\xi)- H_{p-1}(\eta), \xi-\eta \rangle \, \geq \, \frac{4}{p^2}\, |H_{\frac{p}{2}}(\xi) - H_{\frac{p}{2}}(\eta)|^2 \label{monprop1}
    \end{equation}
and
      \begin{equation*}
           |H_{p-1}(\xi)- H_{p-1}(\eta)| \leq (p-1) \left( |H_{\frac{p}{2}}(\xi)|^{\frac{p-2}{p}} + |H_{\frac{p}{2}}(\eta)|^{\frac{p-2}{p}} \right) |H_{\frac{p}{2}}(\xi) - H_{\frac{p}{2}}(\eta)|
    \end{equation*}
    hold for every $\xi, \eta \in \mathbb{R}^{n}$. 
\end{lem}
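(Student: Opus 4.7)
The idea is to exploit the identity $H_{p-1}(\xi) = |H_{p/2}(\xi)|^{(p-2)/p}\, H_{p/2}(\xi)$, which follows immediately from the definition since $|H_{p/2}(\xi)| = (|\xi|-1)_+^{p/2}$, together with the explicit derivative of $H_\gamma$ on the smooth region $\{|\xi|>1\}$. Both inequalities are trivial when $|\xi|, |\eta| \le 1$ (both sides vanish), and the mixed case (only one of them in the degeneracy ball) reduces to a one-line check using $\xi\cdot(\xi-\eta)\ge |\xi|(|\xi|-1)$. After splitting the segment from $\eta$ to $\xi$ along its crossings of $\{|\cdot|=1\}$, everything is pointwise computable.

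For \eqref{monprop1}, the heart of the matter is the pointwise comparison on $\{|\xi|>1\}$:
\[
|DH_{p/2}(\xi)\zeta|^2 \le \frac{p^2}{4}\langle DH_{p-1}(\xi)\zeta, \zeta\rangle \qquad \forall\, \zeta \in \mathbb{R}^n.
\]
From the explicit formula
\[
DH_\gamma(\xi) = \gamma(|\xi|-1)^{\gamma-1}\,\frac{\xi \otimes \xi}{|\xi|^2} + \frac{(|\xi|-1)^\gamma}{|\xi|}\left(I - \frac{\xi \otimes \xi}{|\xi|^2}\right),
\]
splitting $\zeta$ into components parallel and perpendicular to $\xi$ decouples the check into two scalar inequalities: the parallel one reads $(p/2)^2\le (p^2/4)(p-1)$, valid for $p\ge 2$, and the perpendicular one reads $(|\xi|-1)/|\xi| \le p^2/4$, which is trivial since the left side is $<1$. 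Writing
\[
H_{p/2}(\xi) - H_{p/2}(\eta) = \int_0^1 DH_{p/2}(\eta+t(\xi-\eta))(\xi-\eta)\,dt,
\]
applying Jensen's inequality $\bigl|\int_0^1 f\bigr|^2 \le \int_0^1 |f|^2$, and invoking the pointwise bound yields \eqref{monprop1}.

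For the second estimate I would use $H_{p-1} = \phi \circ H_{p/2}$ with $\phi(w) = |w|^{\alpha} w$, $\alpha = (p-2)/p \in [0,1)$. Since $|D\phi(w)| = (1+\alpha)|w|^\alpha$, the fundamental theorem of calculus applied to $\phi$ along the segment from $H_{p/2}(\eta)$ to $H_{p/2}(\xi)$ gives
\[
|H_{p-1}(\xi) - H_{p-1}(\eta)| \le (1+\alpha) \int_0^1 |w_t|^\alpha\,dt \cdot |H_{p/2}(\xi) - H_{p/2}(\eta)|,
\]
where $w_t = H_{p/2}(\eta) + t(H_{p/2}(\xi) - H_{p/2}(\eta))$. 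Subadditivity of $s \mapsto s^\alpha$ on $[0,\infty)$ then bounds $|w_t|^\alpha \le |H_{p/2}(\xi)|^\alpha + |H_{p/2}(\eta)|^\alpha$, and the prefactor $1+\alpha = (2p-2)/p$ is at most $p-1$ for $p \ge 2$, yielding the stated inequality.

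The only genuine technicality is that $H_\gamma$ fails to be smooth across $\{|\cdot|=1\}$, but since $DH_\gamma$ vanishes continuously from inside the unit ball and the segment $\eta+t(\xi-\eta)$ meets the unit sphere at most finitely often, the integral representations above remain valid after splitting the integration interval at those crossings; no serious obstacle arises.
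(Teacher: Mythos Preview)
The paper does not actually prove this lemma; it is stated with the attribution ``see \cite[Lemma 4.1]{Bra}'' and no argument is given. Your proposal therefore supplies what the paper omits, and the approach you take---writing both $H_{p/2}(\xi)-H_{p/2}(\eta)$ and $H_{p-1}(\xi)-H_{p-1}(\eta)$ as line integrals, comparing $|DH_{p/2}(\xi)\zeta|^2$ with $\langle DH_{p-1}(\xi)\zeta,\zeta\rangle$ pointwise, and factoring $H_{p-1}=\phi\circ H_{p/2}$ with $\phi(w)=|w|^{(p-2)/p}w$---is the standard route and is correct.

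Two minor remarks. First, the separate treatment of the ``mixed case'' is unnecessary: once you observe that $DH_{p/2}$ and $DH_{p-1}$ both vanish identically on $\{|\cdot|<1\}$, the integral representation and the pointwise bound already cover every configuration of $\xi$ and $\eta$, including the case where one or both lie in the closed unit ball and the case where the segment dips in and out of it. Second, for $p=2$ the map $H_{p/2}=H_1$ is only Lipschitz, not $C^1$, across $\{|\cdot|=1\}$; your ``splitting at crossings'' remark handles this, but it is cleaner simply to note that $H_\gamma$ is globally Lipschitz for $\gamma\ge 1$, hence absolutely continuous along any segment, so the fundamental theorem of calculus applies with the a.e.\ derivative (which equals the explicit formula off the unit sphere and zero inside). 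With these cosmetic adjustments your argument is complete.
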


We state a monotonicity property for the function $H_{p-1}$, whose proof can be found in \cite[Lemma 2.8]{BoDuGiPa}.
\begin{lem}
\label{lem.mon} Let {$1 < p<\infty$}. Then, 
for every $\xi,\eta\in\mathbb{R}^{n}$, with $\vert\eta\vert>1$, we have 
\[
\langle H_{p-1}(\xi)-H_{p-1}(\eta),\xi-\eta\rangle\,\geq\,\frac{\min\,\{1,p-1\}}{2^{p+1}}\,\,\frac{(\vert\eta\vert-1)^{p}}{\vert\eta\vert\,(\vert\xi\vert+\vert\eta\vert)}\,\vert\xi-\eta\vert^{2}.
\]

\end{lem}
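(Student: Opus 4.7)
The plan is to exploit that $H_{p-1} = \nabla \Phi$, where $\Phi(\xi) := \tfrac{1}{p}(|\xi|-1)_{+}^{p}$ is a convex $C^{1,1}$ potential, and to reduce the inequality to a one-dimensional line-integral estimate. Setting $w(t) := (1-t)\eta + t\xi$, the fundamental theorem of calculus yields
\[
\langle H_{p-1}(\xi)-H_{p-1}(\eta),\,\xi-\eta\rangle \,=\, \int_{0}^{1} \langle D^{2}\Phi(w(t))(\xi-\eta),\,\xi-\eta\rangle\, dt.
\]

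For $|w|>1$ a direct computation gives that $D^{2}\Phi(w)$ has eigenvalue $(p-1)(|w|-1)^{p-2}$ along $w$ and eigenvalue $(|w|-1)^{p-1}/|w|$ with multiplicity $n-1$ in the orthogonal hyperplane, while $D^2\Phi \equiv 0$ on $\{|w|<1\}$. Using the elementary comparison $\min\{p-1,\,(|w|-1)/|w|\}\geq\min\{1,p-1\}\cdot(|w|-1)/|w|$ (checked by splitting the cases $p \geq 2$ and $1<p<2$), I would deduce the pointwise bound
\[
\langle D^{2}\Phi(w)v,v\rangle \,\geq\, \min\{1,p-1\}\,\frac{(|w|-1)_{+}^{p-1}}{|w|}\,|v|^{2}.
\]
Inserting this into the integral and factoring out $\min\{1,p-1\}|\xi-\eta|^{2}$ reduces the claim to
\[
I \,:=\, \int_{0}^{1} \frac{(|w(t)|-1)_{+}^{p-1}}{|w(t)|}\, dt \,\geq\, \frac{1}{2^{p+1}}\cdot\frac{(|\eta|-1)^{p}}{|\eta|(|\xi|+|\eta|)}.
\]

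To lower bound $I$, I would exploit $|\eta|>1$ by restricting to the sub-interval $[0,t_{0}]$ with $t_{0} := \min\{1,\,(|\eta|-1)/(2|\xi-\eta|)\}$; the triangle inequality then gives $|w(t)|-1 \geq (|\eta|-1)/2$ and $|w(t)|\leq |\xi|+|\eta|$ throughout, so that $I \geq t_{0}\cdot(|\eta|-1)^{p-1}/[2^{p-1}(|\xi|+|\eta|)]$. A routine case split on whether $t_{0}=1$ or $t_{0}<1$ shows that this already dominates the target bound in the regime $|\xi-\eta| \leq 2|\eta|$, which automatically covers the degenerate subcase $|\xi|\leq 1$ since then $|\xi-\eta|\leq 1+|\eta|\leq 2|\eta|$.

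The hard part will be the remaining regime $|\xi-\eta|>2|\eta|$, where $t_{0}$ alone is too small to reach the target. Here the reverse triangle inequality forces $|\xi|>|\eta|>1$, so I would mirror the previous construction near the opposite endpoint: with $s_{0} := (|\xi|-1)/(2|\xi-\eta|)$ the same estimate on $[1-s_{0},1]$ yields $I \geq (|\xi|-1)^{p}/[2^{p}|\xi-\eta|(|\xi|+|\eta|)]$, after which the desired bound reduces to showing $2|\eta|(|\xi|-1)^{p}\geq |\xi-\eta|(|\eta|-1)^{p}$. Using $|\xi|-1 \geq |\xi-\eta|-|\eta|-1$ (triangle inequality), this is equivalent to verifying that $g(x) := (x-|\eta|-1)^{p}/x$ dominates $(|\eta|-1)^{p}/(2|\eta|)$ for $x>2|\eta|$, which follows from $g'>0$ on $(|\eta|+1,\infty)$ together with the equality $g(2|\eta|)=(|\eta|-1)^{p}/(2|\eta|)$.
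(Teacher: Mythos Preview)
The paper does not give its own proof of this lemma; it simply records the statement and refers to \cite[Lemma~2.8]{BoDuGiPa}. So there is no in-paper argument to compare against, and your self-contained proof is a genuine addition rather than a reworking of something already present.

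Your strategy---writing $H_{p-1}=\nabla\Phi$ for $\Phi(\xi)=\tfrac1p(|\xi|-1)_+^{p}$, expressing the monotonicity form as a line integral of $D^{2}\Phi$, and then lower-bounding the integrand by $\min\{1,p-1\}\,(|w|-1)_+^{p-1}/|w|$---is sound; the eigenvalue computation agrees with Lemma~\ref{lemH}. The subsequent reduction to the one-dimensional estimate for $I$ and the two-regime argument (restricting to $[0,t_0]$ near $\eta$, and to $[1-s_0,1]$ near $\xi$ when $|\xi-\eta|>2|\eta|$) goes through as you describe; in particular your closing computation $g(2|\eta|)=(|\eta|-1)^{p}/(2|\eta|)$ together with $g'>0$ on $(|\eta|+1,\infty)$ is correct, since $g'(x)=(x-|\eta|-1)^{p-1}[(p-1)x+|\eta|+1]/x^{2}$.

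One small inaccuracy to fix in the write-up: $\Phi$ is \emph{not} $C^{1,1}$ in the range $1<p<2$ (the radial eigenvalue $(p-1)(|w|-1)^{p-2}$ blows up at $|w|=1$), so the line-integral identity needs a word of justification there. It still holds: the map $t\mapsto\langle H_{p-1}(w(t)),\xi-\eta\rangle$ is continuous and nondecreasing by convexity of $\Phi$, and its a.e.\ derivative $\langle D^{2}\Phi(w(t))(\xi-\eta),\xi-\eta\rangle$ is integrable on $[0,1]$ because the set $\{|w(t)|=1\}$ is finite and the singularity is of order $(t-t_*)^{p-2}$ (or $(t-t_*)^{2p-3}$ at a tangency), which is integrable for every $p>1$. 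Alternatively, since you only use the inequality $\langle H_{p-1}(\xi)-H_{p-1}(\eta),\xi-\eta\rangle\geq\int_0^1\langle D^{2}\Phi(w)\,(\xi-\eta),\xi-\eta\rangle\,dt$, that direction follows directly from monotonicity without any regularity beyond $C^1$.
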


{Combining Lemmas \ref{lemmaHalpha2} and \ref{lem.mon}, we have the following}
\begin{lem}\label{ellcond}
   Let {$2 \le p<\infty$}. Then, there exists a positive constant $c=c(p)$ such that 
for every $\xi,\eta\in\mathbb{R}^{n} \setminus \{0 \}$, we have 
\begin{equation}\label{monprop}
    \langle H_{p-1}(\xi)-H_{p-1}(\eta),\xi-\eta\rangle\,\geq\,c(p)\,\left[\frac{(\vert\xi\vert-1)_+^{p}}{\vert\xi\vert^2}+\frac{(\vert\eta\vert-1)_+^{p}}{\vert\eta\vert^2} \right]\,\vert\xi-\eta\vert^{2}.
\end{equation}
\end{lem}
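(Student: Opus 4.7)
The plan is to proceed by cases according to how $|\xi|$ and $|\eta|$ compare with $1$, since only the positive parts $(|\cdot|-1)_+$ appear on the right-hand side of \eqref{monprop}. In the fully degenerate case $|\xi|,|\eta|\le 1$, the right-hand side vanishes while the left-hand side is non-negative (for instance by Lemma \ref{lemmaHalpha2}), so the claim is trivial. In the mixed case in which exactly one norm exceeds $1$, say $|\xi|\le 1<|\eta|$, only the $\eta$-term survives on the right; I would then apply Lemma \ref{lem.mon} and use the crude bound $|\xi|+|\eta|\le 1+|\eta|\le 2|\eta|$ to replace $|\eta|(|\xi|+|\eta|)$ by $2|\eta|^2$ in the denominator, producing the desired inequality with a constant depending only on $p$.

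The main case is when $|\xi|,|\eta|>1$. Here I would apply Lemma \ref{lem.mon} twice—once as stated and once with $\xi$ and $\eta$ interchanged, which is legitimate because the left-hand side is symmetric—and sum the two resulting inequalities to obtain
\begin{equation*}
2\langle H_{p-1}(\xi)-H_{p-1}(\eta),\xi-\eta\rangle \,\ge\, \frac{\min\{1,p-1\}}{2^{p+1}}\cdot\frac{1}{|\xi|+|\eta|}\left[\frac{(|\xi|-1)^p}{|\xi|}+\frac{(|\eta|-1)^p}{|\eta|}\right]|\xi-\eta|^2.
\end{equation*}
The elementary bound $|\xi|(|\xi|+|\eta|)\le \tfrac{3}{2}(|\xi|^2+|\eta|^2)$ (which follows from $|\xi||\eta|\le\tfrac12(|\xi|^2+|\eta|^2)$), and its analogue with $\eta$ in place of $\xi$, then turn this into an estimate of the form
\begin{equation*}
\langle H_{p-1}(\xi)-H_{p-1}(\eta),\xi-\eta\rangle \,\ge\, c(p)\,\frac{(|\xi|-1)^p+(|\eta|-1)^p}{|\xi|^2+|\eta|^2}\,|\xi-\eta|^2,
\end{equation*}
to which Lemma \ref{alglem} applies directly, splitting the combined fraction into the symmetric sum appearing on the right-hand side of \eqref{monprop}.

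The only real obstacle is the main case: after summing the two applications of Lemma \ref{lem.mon} one is left with a single fraction having denominator $|\xi|+|\eta|$ rather than $|\xi|^2$ and $|\eta|^2$ separately, and one has to recognize that Lemma \ref{alglem} is precisely the algebraic identity that unscrambles the combined term into the desired symmetric sum. Everything else reduces to keeping track of explicit constants depending only on $p$.
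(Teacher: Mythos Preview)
Your proposal is correct and follows essentially the same route as the paper: both use Lemma~\ref{lem.mon} in the two symmetric directions, sum, pass to a denominator comparable to $|\xi|^2+|\eta|^2$, and then invoke Lemma~\ref{alglem}. The only organizational difference is that the paper avoids your explicit case analysis by observing (via Lemma~\ref{lemmaHalpha2}) that the positive-part version of the one-sided estimate from Lemma~\ref{lem.mon} holds for \emph{all} nonzero $\xi,\eta$, which absorbs your degenerate and mixed cases into a single line.
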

\proof 
By virtue of the inequality \eqref{monprop1} in Lemma \ref{lemmaHalpha2}, the left-hand side of \eqref{monprop} is non negative. This, together with Lemma \ref{lem.mon}, yields that there exists a constant $c=c(p)>0$ such that for any $\xi, \eta\in \mathbb{R}^n \setminus \{0\}$, we have
$$\langle H_{p-1}(\xi)-H_{p-1}(\eta),\xi-\eta\rangle\,\geq\,c(p)\,\frac{(\vert\eta\vert-1)_+^{p}}{\vert\eta\vert\,(\vert\xi\vert+\vert\eta\vert)}\,\vert\xi-\eta\vert^{2}$$
and
$$\langle H_{p-1}(\xi)-H_{p-1}(\eta),\xi-\eta\rangle\,\geq\,c(p)\,\frac{(\vert\xi\vert-1)_+^{p}}{\vert\xi\vert\,(\vert\xi\vert+\vert\eta\vert)}\,\vert\xi-\eta\vert^{2}.$$
Using that $|\xi|, |\eta| \le |\xi|+ |\eta|$ and summing the previous inequalities, we get
\begin{equation}
   2 \, \langle H_{p-1}(\xi)-H_{p-1}(\eta),\xi-\eta\rangle\, \ge
\,c(p)\,\frac{(\vert\xi\vert-1)_+^{p}+(\vert\eta\vert-1)_+^{p}}{(\vert\xi\vert+\vert\eta\vert)^2} \,\vert\xi-\eta\vert^{2}. \label{monprop3}
\end{equation}
Finally, applying Lemma \ref{alglem} in the right-hand side of \eqref{monprop3}, we obtain the desired inequality \eqref{monprop}.
\endproof

The following Lemma provides the revelant ellipticity and boundedness properties of the bilinear form $DH_{p-1}(z)$, whose proof can be found in \cite{AGPdN}.

\begin{lem}\label{lemH}
Let $z\in\mathbb{R}^{n}\setminus\{0\}$ and $p \ge 2$.
Then, for every $\zeta\in\mathbb{R}^{n}$ we have 
\begin{align*}
\frac{(|z|-1)_+^{p-1}}{|z|}\,|\zeta|^{2}\le\langle DH_{p-1}(z)\,\zeta,\zeta\rangle\le(p-1)(|z|-1)_+^{p-2}\,|\zeta|^{2}.
\end{align*} 
\end{lem}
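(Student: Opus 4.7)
The plan is to prove Lemma \ref{lemH} by direct computation of $DH_{p-1}(z)$, decomposing the resulting quadratic form into a ``radial'' and a ``tangential'' component, and then comparing the two coefficients that appear.

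First, I would dispose of the trivial case $|z|\le 1$. Since $(|z|-1)_+^{p-1}$ vanishes identically on $\{|z|<1\}$ and, thanks to $p\ge 2$ (so the exponent is $\ge 1$), extends as a $C^1$ function past the sphere $\{|z|=1\}$ with vanishing derivative there, the matrix $DH_{p-1}(z)$ is zero whenever $|z|\le 1$; on this region both sides of the claimed chain of inequalities vanish, and there is nothing to prove.

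For $|z|>1$, I would differentiate $H_{p-1}^i(z)=(|z|-1)^{p-1}\,z_i/|z|$ componentwise using the product rule together with $\partial_j|z|=z_j/|z|$ and $\partial_j(z_i/|z|)=\delta_{ij}/|z|-z_iz_j/|z|^3$. Contracting twice against $\zeta\in\mathbb{R}^n$ gives the explicit identity
\begin{equation*}
\langle DH_{p-1}(z)\zeta,\zeta\rangle
= (p-1)(|z|-1)^{p-2}\,\frac{(z\cdot\zeta)^2}{|z|^2}
+ \frac{(|z|-1)^{p-1}}{|z|}\Bigl(|\zeta|^2-\frac{(z\cdot\zeta)^2}{|z|^2}\Bigr).
\end{equation*}
Setting $a:=(z\cdot\zeta)^2/|z|^2$, Cauchy--Schwarz gives $0\le a\le|\zeta|^2$, so both terms are non-negative and the quadratic form is a convex combination of the two coefficients $A:=(p-1)(|z|-1)^{p-2}$ and $B:=(|z|-1)^{p-1}/|z|$ weighted by $a$ and $|\zeta|^2-a$, respectively.

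The key elementary observation is that for $|z|>1$ and $p\ge 2$,
\begin{equation*}
B = (|z|-1)^{p-2}\cdot\frac{|z|-1}{|z|}\le (|z|-1)^{p-2}\le (p-1)(|z|-1)^{p-2} = A,
\end{equation*}
since $(|z|-1)/|z|<1$ and $p-1\ge 1$. Therefore the whole quadratic form lies between $B|\zeta|^2$ (replacing $A$ by $B$ in the radial term) and $A|\zeta|^2$ (replacing $B$ by $A$ in the tangential term), which is exactly the statement of the lemma. There is no real obstacle here beyond a careful product-rule calculation; the mild subtlety is only in checking that the crucial inequality $B\le A$ uses $p\ge 2$ in an essential way and that the case $|z|\le 1$ is handled by the $C^1$ extension argument rather than by formal differentiation.
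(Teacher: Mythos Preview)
Your argument is correct and is the standard route; the paper itself does not give a proof of this lemma but simply cites \cite{AGPdN}, so there is no ``paper's own proof'' to compare against beyond noting that the direct computation plus the radial/tangential splitting you carry out is exactly the expected argument.

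One small caveat: your claim that $H_{p-1}$ extends as a $C^1$ function across $\{|z|=1\}$ with vanishing derivative there is only valid for $p>2$. When $p=2$ the function $(|z|-1)_+$ is merely Lipschitz, and the one-sided Jacobian of $H_1$ from $\{|z|>1\}$ at a point with $|z|=1$ equals the rank-one matrix $z\otimes z/|z|^2$, not zero; hence $DH_1$ does not exist classically on the sphere. This is harmless: for $|z|<1$ the derivative is identically zero and both bounds are trivially $0$, while on $\{|z|=1\}$ the lemma has to be read either as a statement away from the sphere or with $DH_{p-1}(z)$ interpreted via the formula valid for $|z|>1$ (in which case $B=0\le A=p-1$ and your chain of inequalities still holds). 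None of the applications in the paper use the lemma on $\{|Du|=1\}$, so this edge case is irrelevant in practice.
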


\subsection{Difference Quotient}
\label{secquo}
We recall some properties of the finite difference quotient operator that will be needed in the sequel. 

\begin{dfn}
Let $F$ be a function defined in an open set $\Omega \subset \mathbb{R}^n$ and let $h \in \mathbb{R}^n$. We consider the function
$$ 
\tau_{h}F(x) :=F(x+h)-F(x) ,$$
\end{dfn}
\noindent which is defined in the set
$$\Omega_{|h|}: = \{ x \in \Omega : \mathrm{dist}(x,\partial \Omega)> |h|  \}.$$

We start with the description of some elementary properties that can be found, for example, in \cite[Chapter 8]{giusti}.
\begin{prop}\label{rapportoincrementale}
Let $F \in W^{1,p}(\Omega)$, with $p \geq1$, and let $G:\Omega \rightarrow \mathbb{R}$ be a measurable function.
Then
\\(i) $\tau_{h}F \in W^{1,p}(\Omega_{|h|})$ and 
$$D_{i}(\tau_{h}F)=\tau_{h}(D_{i}F).$$
(ii) If at least one of the functions $F$ or $G$ has support contained in $\Omega_{|h|}$, then
$$\displaystyle\int_{\Omega}F \tau_h G   dx = \displaystyle\int_{\Omega} G \tau_{-h}F dx.$$
(iii) We have $$\tau_{h} (FG)(x)= F(x+h)\tau_{h} G(x)+G(x) \tau_{h} F(x).$$
\end{prop}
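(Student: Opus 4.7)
The statement collects three elementary properties of $\tau_h$, so the plan is essentially to verify each one directly from the definition $\tau_h F(x)=F(x+h)-F(x)$, using translation invariance of Sobolev spaces and a change of variables. These are standard facts (the paper indeed attributes them to \cite[Chapter 8]{giusti}), so no deep ingredient is needed; the task is to present clean derivations.

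For part (i), I would begin by noting that translation acts isometrically on $W^{1,p}$: if $F\in W^{1,p}(\Omega)$, the shifted function $F(\cdot+h)$ belongs to $W^{1,p}(\Omega_{|h|})$ with weak derivative $(D_iF)(\cdot+h)$. This can be seen by approximating $F$ by $C^\infty$ functions $F_\varepsilon\to F$ in $W^{1,p}_{\mathrm{loc}}$, applying the classical chain rule $D_i[F_\varepsilon(\cdot+h)]=(D_iF_\varepsilon)(\cdot+h)$, and passing to the limit in the defining integral against test functions compactly supported in $\Omega_{|h|}$. Subtracting $F$ yields $\tau_hF\in W^{1,p}(\Omega_{|h|})$ with $D_i(\tau_hF)=\tau_h(D_iF)$, which is exactly the claim.

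For part (ii), I would perform the direct computation
\[
\int_{\Omega} F(x)\,\tau_h G(x)\,dx=\int_{\Omega} F(x)G(x+h)\,dx-\int_{\Omega} F(x)G(x)\,dx,
\]
and then in the first integral change variables $y=x+h$. The support assumption on $F$ or $G$ guarantees that the integrands vanish outside a set on which this translation is a genuine bijection with $\Omega$, so no boundary terms appear. The first integral becomes $\int_\Omega F(y-h)G(y)\,dy$, and collecting terms gives $\int_\Omega G(x)[F(x-h)-F(x)]\,dx=\int_\Omega G\,\tau_{-h}F\,dx$.

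For part (iii), I would simply add and subtract $F(x+h)G(x)$:
\[
\tau_h(FG)(x)=F(x+h)G(x+h)-F(x)G(x)=F(x+h)\bigl[G(x+h)-G(x)\bigr]+G(x)\bigl[F(x+h)-F(x)\bigr],
\]
which is the stated identity. No step here is a genuine obstacle; the only mild care is in part (i), where one must justify that the translation of a Sobolev function is Sobolev with the translated derivative, and in part (ii), where the support condition must be used to avoid boundary contributions when translating the variable of integration.
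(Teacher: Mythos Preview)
Your proposal is correct and is exactly the standard verification of these elementary properties; the paper itself does not prove this proposition but simply refers to \cite[Chapter~8]{giusti}, so there is no alternative approach to compare against. Your argument would serve as a suitable self-contained proof.
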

The next result about the finite difference operator is a kind of integral version of Lagrange Theorem.
\begin{lem}\label{ldiff}
If $0<\rho<R,$ $|h|<\frac{R-\rho}{2},$ $1<p<+\infty$ and $F\in W^{1,p}(B_{R}, \mathbb{R}^k)$, then
\begin{center}
$\displaystyle\int_{B_{\rho}} |\tau_{h}F(x)|^{p} dx \leq c(n,p)|h|^{p} \displaystyle\int_{B_{R}} |DF(x)|^{p} dx$.
\end{center}
Moreover,
\begin{center}
$\displaystyle\int_{B_{\rho}} |F(x+h)|^{p} d x \leq  \displaystyle\int_{B_{R}} |F(x)|^{p}d x$.
\end{center}
\end{lem}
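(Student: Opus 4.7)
The plan is to establish both estimates first for smooth $F$ via the fundamental theorem of calculus along the segment from $x$ to $x+h$, and then extend to the general $W^{1,p}$ case by a standard mollification argument. The geometric observation that makes everything work is that the hypothesis $|h|<(R-\rho)/2$ guarantees $\rho+|h|<(R+\rho)/2<R$, so $B_\rho+th\subset B_{\rho+|h|}\Subset B_R$ for every $t\in[0,1]$.

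For $F\in C^1(B_R,\mathbb{R}^k)\cap W^{1,p}(B_R,\mathbb{R}^k)$ and any $x\in B_\rho$, the segment $\{x+th:t\in[0,1]\}$ lies in $B_R$, so I can write
$$\tau_h F(x)=F(x+h)-F(x)=\int_0^1\langle DF(x+th),h\rangle\,dt.$$
Taking absolute values and applying Jensen's inequality to the convex function $s\mapsto s^p$ gives
$$|\tau_h F(x)|^p\le |h|^p\int_0^1 |DF(x+th)|^p\,dt.$$
Integrating over $B_\rho$, swapping the order of integration by Fubini and then performing the change of variables $y=x+th$ for each fixed $t$ (with unit Jacobian), the image $B_\rho+th$ sits inside $B_R$, so
$$\int_{B_\rho}|\tau_h F(x)|^p\,dx\le |h|^p\int_0^1\int_{B_\rho+th}|DF(y)|^p\,dy\,dt\le |h|^p\int_{B_R}|DF(y)|^p\,dy,$$
which is the first estimate (the constant $c(n,p)$ in the statement is generous; one can take $c=1$ here).

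For general $F\in W^{1,p}(B_R,\mathbb{R}^k)$ I would take standard mollifications $F_\varepsilon\in C^\infty$ on a slightly smaller ball, apply the smooth estimate to $F_\varepsilon$, and pass to the limit using $F_\varepsilon\to F$ and $DF_\varepsilon\to DF$ in $L^p_{\mathrm{loc}}$ (any pair of radii $\rho<\rho'<(R+\rho)/2<R'<R$ works to carry out the mollification without losing the inclusion). The second estimate follows immediately from the translation change of variables $y=x+h$, which yields
$$\int_{B_\rho}|F(x+h)|^p\,dx=\int_{B_\rho+h}|F(y)|^p\,dy\le\int_{B_R}|F(y)|^p\,dy,$$
since $B_\rho+h\subset B_{\rho+|h|}\subset B_R$. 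There is no real obstacle here; the only thing to watch is the geometric inclusion of balls, which is guaranteed by the quantitative assumption on $|h|$.
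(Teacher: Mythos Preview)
Your proof is correct. The paper does not supply its own proof of this lemma: it is stated in the preliminaries as a standard fact (described as ``a kind of integral version of Lagrange Theorem''), with the surrounding difference-quotient results attributed to \cite[Chapter~8]{giusti}. Your argument---the fundamental theorem of calculus along the segment, Jensen, Fubini, and the translation change of variables, plus mollification for the passage from $C^1$ to $W^{1,p}$---is exactly the textbook proof that description points to.
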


Finally, we recall the following fundamental result, whose
proof can be found in \cite[Lemma 8.2]{giusti}. 
\begin{lem}
\noindent \label{lem:RappIncre} Let $F:\mathbb{R}^{n}\rightarrow\mathbb{R}^{k}$,
$F\in L^{p}(B_{R},\mathbb{R}^{k})$ with $1<p<+\infty$. Suppose that
there exist $\rho\in(0,R)$ and a constant $M>0$ such that 
\[
\sum_{j=1}^{n}\int_{B_{\rho}}\left|\tau_{h}F(x)\right|^{p}dx\,\leq\,M^{p}\,\vert h\vert^{p}
\]
for every $h\in\mathbb{R}$ with $\vert h\vert<\frac{R-\rho}{2}$.
Then, $F\in W^{1,p}(B_{\rho},\mathbb{R}^{k})$ and 
\[
\Vert DF\Vert_{L^{p}(B_{\rho})}\leq M.
\]
\end{lem}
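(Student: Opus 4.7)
The plan is to work one coordinate direction at a time: for each $j=1,\dots,n$ I would produce the distributional partial derivative $D_j F$ as a weak $L^p$ limit of difference quotients along $e_j$, and then read off the norm bound from weak lower semicontinuity.

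First I would fix $j$ and consider, for $0<|h|<(R-\rho)/2$, the family
\[
Q_h^j(x):=\frac{\tau_{h e_j}F(x)}{h}.
\]
Discarding the non-negative contributions of the other indices in the hypothesis gives $\|Q_h^j\|_{L^p(B_\rho)}^p\le M^p$, so $\{Q_h^j\}$ is a bounded family in $L^p(B_\rho,\mathbb{R}^k)$. Since $1<p<\infty$, this space is reflexive, and from any sequence $h_n\to 0$ I can extract a subsequence, still denoted $h_n$, such that $Q_{h_n}^j\rightharpoonup g_j$ weakly in $L^p(B_\rho,\mathbb{R}^k)$.

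Next I would identify $g_j$ with $D_j F$. For $\varphi\in C_c^\infty(B_\rho)$ and $|h|$ small enough that $\operatorname{supp}\varphi\subset (B_\rho)_{|h|}$, Proposition \ref{rapportoincrementale}(ii) yields
\[
\int_{B_\rho} Q_h^j\,\varphi\, dx\;=\;\int_{B_\rho} F\cdot\frac{\tau_{-h e_j}\varphi}{h}\, dx.
\]
The left-hand side converges to $\int_{B_\rho} g_j\varphi\, dx$ by weak convergence, while on the right $\tau_{-h e_j}\varphi/h\to -\partial_j\varphi$ uniformly on a fixed compact subset of $B_\rho$, so dominated convergence gives $-\int_{B_\rho}F\,\partial_j\varphi\, dx$. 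Hence $g_j=D_jF$ distributionally; in particular the weak limit is independent of the chosen subsequence, so the whole net $Q_h^j$ converges weakly to $D_jF$ as $h\to 0$.

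Finally, weak lower semicontinuity of the $L^p$-norm yields
\[
\int_{B_\rho}|D_jF|^p\, dx\;\le\;\liminf_{h\to 0}\int_{B_\rho}|Q_h^j|^p\, dx,
\]
and summing over $j$ while invoking the hypothesis gives $\|DF\|_{L^p(B_\rho)}^p\le M^p$, which simultaneously proves $F\in W^{1,p}(B_\rho,\mathbb{R}^k)$ and delivers the claimed estimate. The only non-cosmetic step is the identification of the weak limit with the distributional derivative; this reduces to the integration-by-parts identity for difference quotients in Proposition \ref{rapportoincrementale}(ii), after which reflexivity of $L^p$ and weak lower semicontinuity carry the remainder of the argument.
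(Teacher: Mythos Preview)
Your argument is correct and is precisely the standard proof via reflexivity of $L^p$, weak compactness of bounded sets, identification of the weak limit as the distributional derivative through the discrete integration-by-parts identity, and weak lower semicontinuity of the norm; the only point worth making explicit is that when you sum the individual bounds $\|D_jF\|_{L^p}^p\le\liminf_h\|Q_h^j\|_{L^p}^p$ over $j$, superadditivity of $\liminf$ is what lets you pass to $\liminf_h\sum_j\|Q_h^j\|_{L^p}^p\le M^p$. The paper itself does not give a proof of this lemma but simply refers to \cite[Lemma~8.2]{giusti}, where the argument is exactly the one you have written.
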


\subsection{Riesz Potential}
In this section, we recall some results about the Riesz potential. 
\\
If $f \in L^s(\Omega)$, with $s \ge 1$, and $0 < \alpha < n$, then the \textit{potential of order $\alpha$ generated by $f$} is defined by
$$U_\alpha[f](x)= \int_\Omega \dfrac{f(y)}{|x-y|^{n-\alpha}} \, dy.$$

\begin{prop}\label{rieszthm}
Let $f \in L^s(\Omega)$, with $s \ge 1$, and $0 < \alpha < n$. Let $r$ be defined by $$\frac{1}{r}=\frac{1}{s}-\frac{\alpha}{n}.$$
\begin{itemize}
    \item[(a)] If $1 < s < \frac{n}{\alpha}$, then there exists a positive constant $C=C(n,\alpha,s)$ such that for any $1 \le m \le r$ it holds
    \begin{equation}
        \Vert U_\alpha[f] \Vert_{L^m(\Omega)} \le \, C |\Omega|^{\frac{1}{m}-\frac{1}{r}} \Vert f \Vert_{L^s(\Omega)}. \label{Riesz}
    \end{equation}

    \item[(b)] If $s > \frac{n}{\alpha}$, then \eqref{Riesz} holds for any $m \le \infty$. 

    \item[(c)] If $s=\frac{n}{\alpha}$ (in this case $r=\infty$), then \eqref{Riesz} holds for any $m < \infty$.
\end{itemize}

\end{prop}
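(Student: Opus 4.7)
The plan is to treat the three cases separately, using the classical Hardy--Littlewood--Sobolev inequality as the main input and H\"older's inequality on the bounded domain $\Omega$ to interpolate from the endpoint exponent down to any $m$ in the stated range.

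Case (b) is the most elementary and I would do it first. H\"older's inequality gives
$$|U_\alpha[f](x)|\le \|f\|_{L^s(\Omega)}\left(\int_\Omega |x-y|^{-(n-\alpha)s'}\,dy\right)^{1/s'},$$
and the hypothesis $s>n/\alpha$ forces $s'<n/(n-\alpha)$, i.e.\ $(n-\alpha)s'<n$, so the inner integral converges uniformly in $x\in\Omega$ with a bound depending only on $n,\alpha,s$ and $\operatorname{diam}(\Omega)$. This yields $\|U_\alpha[f]\|_{L^\infty(\Omega)}\le C\|f\|_{L^s(\Omega)}$, and the $L^m$ bound for every $m<\infty$ follows from $\|g\|_{L^m(\Omega)}\le |\Omega|^{1/m}\|g\|_{L^\infty(\Omega)}$.

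For case (a), the key ingredient is the classical Hardy--Littlewood--Sobolev inequality: for $1<s<n/\alpha$ and $1/r=1/s-\alpha/n$,
$$\|U_\alpha[f]\|_{L^r(\mathbb{R}^n)}\le C(n,\alpha,s)\,\|f\|_{L^s(\mathbb{R}^n)},$$
which, after extending $f$ by zero outside $\Omega$, delivers the sharp endpoint bound on $\Omega$. For any $1\le m\le r$, a further application of H\"older's inequality on $\Omega$ gives
$$\|U_\alpha[f]\|_{L^m(\Omega)}\le |\Omega|^{1/m-1/r}\,\|U_\alpha[f]\|_{L^r(\Omega)}\le C|\Omega|^{1/m-1/r}\,\|f\|_{L^s(\Omega)},$$
as claimed.

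Case (c), the borderline $s=n/\alpha$, is the most delicate since the HLS endpoint at $r=\infty$ fails. My approach is a perturbation argument: given any finite $m$, choose $\tilde s\in(1,s)$ close enough to $s$ that $\tilde r:=(1/\tilde s-\alpha/n)^{-1}\ge m$; this is possible because $1/\tilde s-\alpha/n$ depends continuously on $\tilde s$ and vanishes at $\tilde s=s$. Since $\Omega$ has finite measure, $L^s(\Omega)\hookrightarrow L^{\tilde s}(\Omega)$ with a constant controlled by $|\Omega|$, so applying case (a) with exponent $\tilde s$ yields the sought bound on $\|U_\alpha[f]\|_{L^m(\Omega)}$. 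The only real obstacle in the whole argument is the HLS inequality itself, which I would not reprove but quote from a standard reference such as Stein's \emph{Singular Integrals and Differentiability Properties of Functions}.
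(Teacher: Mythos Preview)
The paper does not give a proof of this proposition; it is stated in the preliminaries (Section~2.4) as a classical fact about Riesz potentials, with no argument supplied. Your sketch is the standard route and is correct in all three cases. One cosmetic refinement: in case~(b) you bound $\int_\Omega |x-y|^{-(n-\alpha)s'}\,dy$ in terms of $\operatorname{diam}(\Omega)$, whereas the statement asks for dependence through $|\Omega|^{1/m-1/r}$ only; this is fixed by the symmetric decreasing rearrangement estimate $\int_\Omega |x-y|^{-\beta}\,dy\le \int_{\Omega^*}|y|^{-\beta}\,dy = C_n|\Omega|^{1-\beta/n}$ for $\beta<n$, which after taking the $1/s'$ power produces exactly the factor $|\Omega|^{-1/r}$, and then $\|\cdot\|_{L^m(\Omega)}\le |\Omega|^{1/m}\|\cdot\|_{L^\infty(\Omega)}$ gives $|\Omega|^{1/m-1/r}$. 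In case~(c) note that the constant you obtain necessarily depends on $m$ (through the choice of $\tilde s$), which is unavoidable since the endpoint $m=\infty$ genuinely fails.
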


\section{Higher Differentiability }
The aim of this section is to provide some higher differentiability properties for the gradient of solutions to \eqref{Functional}. 
More precisely, we shall prove the following
\begin{thm}\label{Hp2}
Let $u \in W^{1,p}_{loc}(\Omega) \cap {L^{\infty}_{loc}(\Omega)}$ be a local weak solution of \eqref{Functional}. Assume that $f$ satisfies \eqref{F1} and \eqref{F2}. Then, we have
\begin{equation*}
    H_{\frac{p}{2}}(Du) \in W^{1,2}_{loc}(\Omega).
\end{equation*}
\end{thm}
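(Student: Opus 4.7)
The plan is to apply the finite difference quotient method to the weak formulation \eqref{Weaksol} in order to obtain a Caccioppoli-type estimate for the increments $\tau_h H_{p/2}(Du)$ in $L^2_{loc}$, and then conclude by means of Lemma~\ref{lem:RappIncre}. Throughout, I will tacitly use that local weak solutions $u \in W^{1,p}_{loc}\cap L^\infty_{loc}$ of \eqref{Functional} are actually locally Lipschitz: this is known for data $f=f(x)$ by \cite{brasco1,Bra,clop}, and by freezing $u$ in the right-hand side it carries over (under \eqref{F1},\eqref{F2}) to the present setting, so that $Du\in L^\infty_{loc}(\Omega)$.

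Fix concentric balls $B_r \subset B_R \Subset \Omega$ and a cutoff $\eta\in C_c^\infty(B_R)$ with $\eta\equiv 1$ on $B_r$ and $|D\eta|\le C/(R-r)$. For $h\in\mathbb{R}^n$ with $|h|<\mathrm{dist}(\mathrm{supp}\,\eta,\partial\Omega)/2$, I would take $\psi=\eta^2\tau_h u$ as test function and compare the weak formulation at $x$ with the one shifted by $h$; subtracting and using Proposition~\ref{rapportoincrementale} yields
\begin{equation*}
\int_\Omega \langle \tau_h H_{p-1}(Du), D(\eta^2\tau_h u)\rangle\,dx \;=\; \int_\Omega \bigl[f(x+h,u(x+h))-f(x,u(x))\bigr]\,\eta^2\,\tau_h u\,dx.
\end{equation*}
Expanding $D(\eta^2\tau_h u)=\eta^2\tau_h Du+2\eta(\tau_h u)D\eta$ splits the left-hand side into a ``good'' term and a ``boundary'' term.

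For the good term, the monotonicity inequality \eqref{monprop1} of Lemma~\ref{lemmaHalpha2} gives
\begin{equation*}
\int_\Omega \eta^2\,\langle \tau_h H_{p-1}(Du),\tau_h Du\rangle\,dx \;\geq\; \frac{4}{p^2}\int_\Omega \eta^2\,|\tau_h H_{p/2}(Du)|^2\,dx,
\end{equation*}
which is exactly the quantity I want to control. For the boundary term, the second estimate of Lemma~\ref{lemmaHalpha2} together with $\|Du\|_{L^\infty(B_R)}<\infty$ gives $|\tau_h H_{p-1}(Du)|\le C\,|\tau_h H_{p/2}(Du)|$ for a constant $C$ depending on $p$ and $\|Du\|_{L^\infty(B_R)}$; Young's inequality then yields
\begin{equation*}
\Bigl|2\int_\Omega \eta\,(\tau_h u)\,\langle \tau_h H_{p-1}(Du),D\eta\rangle\,dx\Bigr| \;\leq\; \varepsilon\int_\Omega \eta^2|\tau_h H_{p/2}(Du)|^2\,dx + \frac{C_\varepsilon}{(R-r)^2}\int_{B_R}|\tau_h u|^2\,dx,
\end{equation*}
and the first addendum can be absorbed into the left-hand side.

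The right-hand side is where the novelty of having $f=f(x,u)$ enters, but this is mild. Writing
\begin{equation*}
f(x+h,u(x+h))-f(x,u(x)) \;=\; \bigl[f(x+h,u(x+h))-f(x+h,u(x))\bigr] + \bigl[f(x+h,u(x))-f(x,u(x))\bigr],
\end{equation*}
the hypotheses \eqref{F1} and \eqref{F2}, valid on $B_R$ since $u$ is locally bounded, give $|f(x+h,u(x+h))-f(x,u(x))|\le L|\tau_h u|+M|h|$, so the right-hand side is bounded by $\int_\Omega \eta^2(L|\tau_h u|^2+M|h||\tau_h u|)\,dx$. Using Lemma~\ref{ldiff} together with the Lipschitz bound on $u$, both terms are of order $|h|^2$. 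Putting all estimates together produces
\begin{equation*}
\int_{B_r}|\tau_h H_{p/2}(Du)|^2\,dx \;\leq\; C\,|h|^2,
\end{equation*}
with $C$ depending on $p,n,R-r,L,M$ and $\|u\|_{W^{1,\infty}(B_R)}$. Lemma~\ref{lem:RappIncre} applied to $F=H_{p/2}(Du)$ then yields $H_{p/2}(Du)\in W^{1,2}(B_r)$, and the arbitrariness of $B_r\Subset \Omega$ gives the conclusion. The only delicate point in this plan is the absorption step, which genuinely requires $Du$ to be locally bounded so that the exponent $(p-2)/p$ appearing in Lemma~\ref{lemmaHalpha2} does not spoil the comparison between increments of $H_{p-1}(Du)$ and of $H_{p/2}(Du)$.
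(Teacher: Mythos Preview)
Your overall scheme is exactly the paper's: the same test function $\varphi=\tau_{-h}(\eta^{2}\tau_{h}u)$, the same splitting of the left-hand side into a monotone part and a boundary part, the same decomposition of the right-hand side via \eqref{F1}--\eqref{F2}, and the conclusion through Lemma~\ref{lem:RappIncre}.

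The one substantive discrepancy is your appeal to $Du\in L^{\infty}_{loc}$, which is \emph{not} among the hypotheses of the theorem (only $u\in W^{1,p}_{loc}\cap L^{\infty}_{loc}$ is assumed). Your freezing heuristic---regard $g(x):=f(x,u(x))$ as an $x$-dependent datum and import Lipschitz regularity from \cite{brasco1,Bra,clop}---is plausible, but it is neither part of the statement nor verified here, and the paper deliberately proves the theorem without it. In the boundary term the paper does \emph{not} absorb the factor $|H_{p/2}(Du)|^{(p-2)/p}$ into a constant; after Young's inequality it instead applies H\"older with exponents $\tfrac{p}{p-2}$ and $\tfrac{p}{2}$ to the product
\[
\bigl(|H_{p/2}(Du(\cdot+h))|^{(p-2)/p}+|H_{p/2}(Du)|^{(p-2)/p}\bigr)^{2}\,|\tau_{h}u|^{2},
\]
obtaining a bound by $\bigl(\int_{B_{2R}}|Du|^{p}\bigr)^{(p-2)/p}\bigl(\int_{B_{R}}|\tau_{h}u|^{p}\bigr)^{2/p}$, after which Lemma~\ref{ldiff} produces the factor $|h|^{2}$ using only $u\in W^{1,p}_{loc}$. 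Similarly, on the right-hand side no Lipschitz bound is needed: Lemma~\ref{ldiff} applied to $u\in W^{1,p}_{loc}\subset W^{1,2}_{loc}\cap W^{1,1}_{loc}$ already gives $\int_{B_{R}}|\tau_{h}u|^{2}\le c|h|^{2}\int_{B_{2R}}|Du|^{2}$ and $\int_{B_{R}}|\tau_{h}u|\le c|h|\int_{B_{2R}}|Du|$. With this single modification to the treatment of $I_{2}$ your argument becomes the paper's proof and establishes the result under the stated hypotheses, without borrowing an unproved Lipschitz estimate.
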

\begin{proof}
    Fix a ball $B_R \Subset \Omega$ and consider a radius $\rho<R$, a positive cut-off function $\eta \in C_0^{\infty}(B_R)$, with $\eta=1$ on $B_\rho$, $0 \leq \eta \leq 1, \, |D\eta| \leq \frac{c}{R-\rho}.$ Let $|h| < \frac{R -\rho}{2}$. We test \eqref{Functional} with the function
$$\varphi = \tau_{ -h} \left(  \eta^2 \tau_{h} u \right) $$
thus obtaining
\begin{equation*}
    \int_{\Omega} \langle H_{p-1} ( Du), \tau_{-h} D  \left(  \eta^2 \tau_{h} u \right) \rangle \, dx =  \int_{\Omega} f(x,u) \tau_{ -h} \left(  \eta^2 \tau_{h} u \right) dx \, .
\end{equation*}
Hence by $(ii)$ of Proposition \ref{rapportoincrementale}
\begin{equation}
    \int_{\Omega} \langle \tau_{h} H_{p-1} ( Du), D  \left(  \eta^2 \tau_{h} u \right) \rangle \, dx =  \int_{\Omega} \tau_{ h} f(x,u)  \, \eta^2 \tau_{h} u \,  dx \, . \notag
\end{equation}
The previous equality can be written as follows
\begin{align}
 I_1 +I_2  &:=    \int_{\Omega} \langle \tau_{h} H_{p-1} ( Du),   \eta^2  \tau_{h} Du  \rangle \, dx + \int_{\Omega} \langle \tau_{h} H_{p-1} ( Du),   2 \eta D\eta \tau_{h} u  \rangle \, dx \notag \\  
   & = \, \int_{\Omega} \t_{ h} f(x,u)   \, \eta^2 \tau_{h} u \,  dx =: I_3. \notag
\end{align}
For the integral $I_3$ we have
\begin{align}
    I_3 = & \int_\Omega (f(x+h,u(x+h))-f(x,u(x+h))) \eta^2 \tau_h u \, dx \notag\\
    & + \int_\Omega (f(x,u(x+h))-f(x,u(x))) \eta^2 \tau_h u \, dx =: I_{3,1}+I_{3,2}. \notag
\end{align}
Therefore, we get
\begin{equation}\label{SommaInt}
   I_1 \leq |I_2| +|I_{3,1}|+|I_{3,2}| .
\end{equation}
Thanks to Lemma \ref{lemmaHalpha2}, we infer
\begin{align}\label{I_1}
  I_1 \geq {\dfrac{4}{p^2}} \int_\Omega \eta^2 |\tau_h H_{\frac{p}{2}}(Du)|^2 \,  dx.
\end{align}
By using the properties of $\eta$, Lemmas \ref{lemmaHalpha2} and \ref{ldiff}, Young's and H\"older's inequalities, we get
\begin{align}\label{I_2}
 |I_2| &\leq 2 \int_{\Omega} \eta |D \eta|  |\tau_h H_{p-1}(Du)| |\tau_{h} u| \,  dx \notag\\ \notag
 & \leq \frac{c}{R-\rho } \int_{\Omega} \eta  |\tau_h H_{p-1}(Du)| |\tau_{h} u| \, dx \\ \notag
  & \leq \frac{c}{R-\rho } \int_{\Omega} \eta  |\tau_h H_{\frac{p}{2}}(Du)|\left( |H_{\frac{p}{2}}(Du(x+h))|^{\frac{p-2}{p}} + |H_{\frac{p}{2}}(Du(x))|^{\frac{p-2}{p}} \right)  |\tau_{h} u| \,  dx \\ \notag
  & \leq \varepsilon \int_{\Omega} \eta^2  |\tau_h H_{\frac{p}{2}}(Du)|^2 \, dx \\ \notag 
  & \qquad +   \frac{c_\varepsilon}{(R-\rho)^2 } \int_{B_R} \left( |H_{\frac{p}{2}}(Du(x+h))|^{\frac{p-2}{p}} + |H_{\frac{p}{2}}(Du(x))|^{\frac{p-2}{p}} \right)^{2}  |\tau_{h} u|^2 \, dx \\ \notag
   & \leq \varepsilon \int_{\Omega} \eta^2  |\tau_h H_{\frac{p}{2}}(Du)|^2 \, dx \\ \notag 
  & \qquad +  \frac{c_\varepsilon}{(R-\rho)^2 } \left( \int_{B_{2R}}  |H_{\frac{p}{2}}(Du)|^{2} \, dx \right)^{\frac{p-2}{p}} \left( \int_{B_R}  |\tau_{h} u|^p \, dx\right)^{\frac{2}{p}} \\ \notag
   & \leq \varepsilon \int_{\Omega} \eta^2  |\tau_h H_{\frac{p}{2}}(Du)|^2 \, dx \\ \notag 
  & \qquad +  \frac{|h|^2 c_\varepsilon}{(R-\rho)^2 } \left( \int_{B_{2R}}  |Du|^{p} \, dx \right)^{\frac{p-2}{p}} \left( \int_{B_{2R}}  |D u|^p \, dx\right)^{\frac{2}{p}} \\ 
  & \leq \varepsilon \int_{\Omega} \eta^2  |\tau_h H_{\frac{p}{2}}(Du)|^2 \, dx +  \frac{|h|^2 c_\varepsilon}{(R-\rho)^2 }  \int_{B_{2R}}  |Du|^{p} \, dx .
\end{align}
We may use assumption \eqref{F2} and Lemma \ref{ldiff} to infer
\begin{align}\label{I_31}
   |I_{3,1}| \le & \int_\Omega \eta^2  | f(x+h,u(x+h))-f(x,u(x+h))| |\tau_h u| \, dx \notag\\
   \le & \, M |h|\int_{B_R} |\tau_h u| \,dx \notag\\
   \le & \, c(n,  M)  |h|^2 \int_{B_{2R}}  |D u| \,dx \notag\\
   \le & \, c(n, M)  |h|^2 {\int_{B_{2R}}  (1+|D u|^p) \,dx} .
\end{align}
 The local boundedness of $u$, assumption \eqref{F1} and Lemma \ref{ldiff} lead us to the following estimate for $I_{3,2}$.
\begin{align}\label{I_32}
   |I_{3,2}| \le & \int_\Omega \eta^2  | f(x,u(x+h))-f(x,u(x))| |\tau_h u| \, dx \notag\\
   \le & \, L \int_{B_R} |\tau_h u|^2 \,dx \notag\\
   \le & \, c(n,  L)  |h|^2 \int_{B_{2R}}  |D u|^2 \,dx \notag\\
    \le & \, c(n,  L)  |h|^2 \int_{B_{2R}} (1+ |D u|^p) \,dx.
\end{align}
 Inserting estimates \eqref{I_1}, \eqref{I_2}, \eqref{I_31} and \eqref{I_32} in \eqref{SommaInt}, we derive
 \begin{align*}
     {\dfrac{4}{p^2}} \int_\Omega \eta^2 |\tau_h H_{\frac{p}{2}}(Du)|^2 \, dx &\leq  \varepsilon \int_{\Omega} \eta^2  |\tau_h H_{\frac{p}{2}}(Du)|^2 \, dx  + \frac{|h|^2 c_\varepsilon}{(R-\rho)^2 }  \int_{B_{2R}}  |Du|^{p} \,dx \\
    & \qquad  + c(n, L )  |h|^2 \int_{B_{2R}}  (1+|D u|^p) \, dx .
 \end{align*}
Choosing $\varepsilon={\frac{2}{p^2}}$ and reabsorbing the first integral in the right-hand side into the left-hand side, we have
\begin{align*}
      \int_{B_\rho} |\tau_h H_{\frac{p}{2}}(Du)|^2 \, dx &\leq c|h|^2  \int_{B_{2R}} (1+ |D u|^p) \, dx ,
 \end{align*}
where also used that $\eta=1$ on $B_\rho $, with $c=c(p,n, M,L,R,\rho)$ positive constant. {Hence, by Lemma \ref{lem:RappIncre}, we get that $H_\frac{p}{2}(Du) \in W^{1,2}_{loc}(\Omega)$ and this concludes the proof.}
\end{proof}
A consequence of the previous theorem is a $W^{1,2}$-regularity of $H_{p-1}(Du)$.
\begin{cor}\label{cor1}
   Let $u \in  W^{1,\infty}_{loc}(\Omega)$ be a local weak solution of \eqref{Functional}. Assume that $f$ satisfies \eqref{F1} and \eqref{F2}. Then, we have
\begin{equation*}
    H_{p-1}(Du) \in W^{1,2}_{loc}(\Omega).
\end{equation*}
\end{cor}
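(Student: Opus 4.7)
The plan is to deduce the $W^{1,2}_{loc}$-regularity of $H_{p-1}(Du)$ from the $W^{1,2}_{loc}$-regularity of $H_{\frac{p}{2}}(Du)$ (which is granted by Theorem \ref{Hp2}, since $W^{1,\infty}_{loc}\subset W^{1,p}_{loc}\cap L^{\infty}_{loc}$) by exploiting the local boundedness of $|Du|$ and the second estimate of Lemma \ref{lemmaHalpha2}.

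Fix $B_{\rho}\subset B_{R}\Subset\Omega$ and set $K:=\|Du\|_{L^{\infty}(B_{R})}<\infty$. The $L^{2}_{loc}$ membership of $H_{p-1}(Du)$ is trivial, since $|H_{p-1}(Du)|\le (K-1)_{+}^{p-1}$ almost everywhere on $B_{R}$, so the task reduces to controlling the finite difference quotients. For $|h|<(R-\rho)/2$, I apply the second inequality of Lemma \ref{lemmaHalpha2} pointwise with $\xi=Du(x+h)$ and $\eta=Du(x)$ to obtain
\[
|\tau_{h}H_{p-1}(Du)(x)|\le (p-1)\left(|H_{\frac{p}{2}}(Du(x+h))|^{\frac{p-2}{p}}+|H_{\frac{p}{2}}(Du(x))|^{\frac{p-2}{p}}\right)|\tau_{h}H_{\frac{p}{2}}(Du)(x)|.
\]
Since $|H_{\frac{p}{2}}(Du)|\le (K-1)_{+}^{p/2}$ on $B_{R}$, the prefactor is bounded by a constant $C=C(p,K)$, so that
\[
|\tau_{h}H_{p-1}(Du)(x)|\le C(p,K)\,|\tau_{h}H_{\frac{p}{2}}(Du)(x)|\qquad\text{for a.e.\ }x\in B_{\rho}.
\]

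Squaring, integrating over $B_{\rho}$, and using Theorem \ref{Hp2} together with Lemma \ref{ldiff} applied to $H_{\frac{p}{2}}(Du)\in W^{1,2}_{loc}(\Omega)$, I get
\[
\int_{B_{\rho}}|\tau_{h}H_{p-1}(Du)|^{2}\,dx\le C(p,K)^{2}\int_{B_{\rho}}|\tau_{h}H_{\frac{p}{2}}(Du)|^{2}\,dx\le C(p,K,n)\,|h|^{2}\int_{B_{R}}|DH_{\frac{p}{2}}(Du)|^{2}\,dx.
\]
Lemma \ref{lem:RappIncre} then yields $H_{p-1}(Du)\in W^{1,2}(B_{\rho})$, and since $B_{\rho}\Subset\Omega$ was arbitrary, the claim follows.

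There is no real obstacle here; the only subtlety is that without the $L^{\infty}$-bound on $Du$ one cannot absorb the factor $|H_{\frac{p}{2}}(Du)|^{(p-2)/p}$ by a constant, which is precisely why the corollary, unlike Theorem \ref{Hp2}, requires $u\in W^{1,\infty}_{loc}(\Omega)$ rather than merely $u\in W^{1,p}_{loc}\cap L^{\infty}_{loc}$.
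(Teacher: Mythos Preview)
Your proof is correct and rests on the same idea as the paper's: once $H_{\frac{p}{2}}(Du)\in W^{1,2}_{loc}$ is known from Theorem \ref{Hp2}, the local $L^{\infty}$ bound on $Du$ lets one absorb the extra factor $(|Du|-1)_{+}^{(p-2)/2}$ separating $H_{p-1}$ from $H_{\frac{p}{2}}$. The execution differs slightly: the paper argues directly at the level of weak derivatives, using the pointwise relations $|DH_{\frac{p}{2}}(Du)|^{2}\approx (|Du|-1)_{+}^{p-2}|D^{2}u|^{2}$ and $|DH_{p-1}(Du)|^{2}\approx (|Du|-1)_{+}^{2p-4}|D^{2}u|^{2}$ to conclude $|DH_{p-1}(Du)|\le c\,|DH_{\frac{p}{2}}(Du)|$; you instead work with finite differences via the second inequality of Lemma \ref{lemmaHalpha2} and then invoke Lemma \ref{lem:RappIncre}. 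Your route is arguably cleaner in that it avoids any implicit appeal to a weighted chain rule for $D^{2}u$, while the paper's version is shorter once one accepts the $\approx$ relations; conceptually the two are the same argument.
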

\begin{proof}
    We note that 
    $$|DH_{\frac{p}{2}}(Du)|^2 \approx \left( |Du|-1 \right)_+^{p-2}|D^2u|^2 $$
and 
  $$|DH_{p-1}(Du)|^2 \approx \left( |Du|-1 \right)_+^{2p-4}|D^2u|^2. $$
  Since $Du \in L_{loc}^{\infty}(\Omega)$ and $p \ge 2$, we get
  \begin{equation*}
     |DH_{p-1}(Du)|^2 \le   c \,|DH_{\frac{p}{2}}(Du)|^2, 
  \end{equation*}
  hence from Theorem \ref{Hp2}, we have the desired conclusion.
    \end{proof}

\section{Second Order Regularity Results}\label{SecOrd}
In this section, we prove second order regularity results for solutions to \eqref{Functional}. More precisely, we have the following theorem.
\begin{thm}\label{mainthm}
   Let $u \in W^{1,\infty}_{loc}(\Omega)$ be a local weak solution of \eqref{Functional}. Assume that $f$ satisfies \eqref{F1} and \eqref{F2}. Then, for $0 \le \beta \le 1$ and $\gamma < n-2$ ($\gamma=0$ if $n=2$), the following estimates
    \begin{align}
    & \sup_{x \in \Omega} \int_{  B_\rho \cap \{|u_{x_j}|>1\}}\frac{(|Du|-1)_+^{p-1} \,|Du_{x_j}|^2}{(|u_{x_j}|-1)_+^\beta \, |x-y|^{\gamma}}   \, dy \le \, C \label{mainest1}
\end{align}
and
\begin{equation}
     \sup_{x \in \Omega }\int_{ B_\rho \cap \{|Du|>1\}} \frac{(|Du|-1)_+^{p-1-\beta} \,|D^2u|^2}{ |x-y|^{\gamma}}  \, dy \le \, C \label{mainest2}
\end{equation}
hold for every ball $B_\rho \Subset \Omega $, where $C=C(\rho,n,p,\beta,\gamma, M,L, \Vert u \Vert_{W^{1,\infty}})$ is a positive constant.
\end{thm}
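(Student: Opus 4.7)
The plan is to derive (\ref{mainest1}) via a weighted Caccioppoli-type argument on the equation differentiated along $e_{j}$, with the Riesz-type weight $|x-y|^{-\gamma}$ built into the test function, and then to deduce (\ref{mainest2}) from (\ref{mainest1}) by summing over $j$ and separately treating the residual set where no partial derivative of $u$ exceeds $1$ in absolute value.

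First, by Corollary \ref{cor1} we have $H_{p-1}(Du)\in W^{1,2}_{loc}(\Omega)$, which, via a difference-quotient scheme analogous to the one in the proof of Theorem \ref{Hp2} (with a more general test profile and (F1)-(F2) handling the right-hand side), legitimates the weak differentiated identity
$$\int_{\Omega}\langle DH_{p-1}(Du)\,Du_{x_{j}},D\varphi\rangle\,dy=\int_{\Omega}\bigl(\partial_{x_{j}}f(y,u)+\partial_{u}f(y,u)\,u_{x_{j}}\bigr)\,\varphi\,dy$$
for every compactly supported $\varphi$. For fixed $x_{0}\in\Omega$, a cut-off $\eta\in C^{\infty}_{c}(B_{R})$ with $\eta\equiv 1$ on $B_{\rho}$, and the truncated kernel $\Gamma_{\varepsilon}(y):=\min\{|x_{0}-y|,\varepsilon\}^{-\gamma}$, I plan to test with
$$\varphi(y):=(|u_{x_{j}}(y)|-1)_{+}^{1-\beta}\,\mathrm{sgn}(u_{x_{j}}(y))\,\eta^{2}(y)\,\Gamma_{\varepsilon}(y).$$
On $\{|u_{x_{j}}|>1\}$, which lies inside $\{|Du|>1\}$, the piece of $D\varphi$ coming from differentiating the truncation reproduces, thanks to Lemma \ref{lemH} applied with $\zeta=Du_{x_{j}}$, exactly the integrand of (\ref{mainest1}); the piece from $D\eta$ is controlled using $u\in W^{1,\infty}_{loc}$; and the right-hand side is handled with (F1), (F2), and the $L^{\infty}$ bounds on $u$ and $Du$.

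I expect the main obstacle to be the term generated by $D\Gamma_{\varepsilon}$, which brings in an extra singularity of order $|x_{0}-y|^{-\gamma-1}$. A Young-type inequality absorbs most of it into the good quantity and leaves a remainder of the form
$$\int_{B_{R}}(|Du|-1)_{+}^{p-3}\,(|u_{x_{j}}|-1)_{+}^{2-\beta}\,\frac{|D\Gamma_{\varepsilon}|^{2}}{\Gamma_{\varepsilon}}\,dy\;\lesssim\;\int_{B_{R}}\frac{dy}{|x_{0}-y|^{\gamma+2}},$$
where the pointwise bound $(|u_{x_{j}}|-1)_{+}\le(|Du|-1)_{+}$ combined with $p-1-\beta\ge 0$ and $Du\in L^{\infty}_{loc}$ makes the algebraic factor uniformly bounded; the resulting kernel is integrable uniformly in $x_{0}$ precisely under the hypothesis $\gamma<n-2$. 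Passing to the limit $\varepsilon\to 0$ by monotone convergence then yields (\ref{mainest1}). For (\ref{mainest2}), summing (\ref{mainest1}) over $j$ and invoking $(|u_{x_{j}}|-1)_{+}\le(|Du|-1)_{+}$ controls the integrand on $\bigcup_{j}\{|u_{x_{j}}|>1\}$; on the residual portion of $\{|Du|>1\}$ where $|u_{x_{j}}|\le 1$ for every $j$, one has $|Du|\le\sqrt n$ so the weight $(|Du|-1)_{+}^{p-1-\beta}$ is uniformly bounded, and the missing contribution is recovered by rerunning the same Caccioppoli scheme with the test function built from $(|Du|-1)_{+}^{1-\beta}\,u_{x_{j}}/|Du|$ in place of the original truncation on $u_{x_{j}}$.
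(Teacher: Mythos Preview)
Your scheme matches the paper's: differentiate the equation (legitimate after Corollary~\ref{cor1}), test with a truncation of $u_{x_j}$ times a cut-off and a regularized Riesz kernel, use Lemma~\ref{lemH} for the lower ellipticity, absorb the cross terms from $D\eta$ and from the kernel gradient by Cauchy--Schwarz/Young, and invoke $\gamma+2<n$ so that $\int_{B_R}|x-y|^{-\gamma-2}\,dy$ is finite uniformly in $x$. The genuine gap is the admissibility of your test function $\varphi=(|u_{x_j}|-1)_+^{1-\beta}\,\mathrm{sgn}(u_{x_j})\,\eta^{2}\Gamma_{\varepsilon}$. You apply the chain rule as though $u_{x_j}\in W^{1,2}_{loc}$, but the only second-order information available a priori is $(|Du|-1)_+^{p-2}|D^2u|^2\in L^1_{loc}$ (from Theorem~\ref{Hp2}); the factor $(|u_{x_j}|-1)_+^{-\beta}$ appearing in $D\varphi$ blows up precisely where this weight vanishes, so $D\varphi\in L^2$ is not known and the pairing with $DH_{p-1}(Du)\,Du_{x_j}$ is only formal. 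The paper handles this with a \emph{second} regularization layer: it replaces your profile by $G_\varepsilon(u_{x_j})/(|u_{x_j}|-1)_+^\beta$ with $G_\varepsilon$ Lipschitz and vanishing on $\{|s|\le 1+\varepsilon\}$, so that the test function is supported in $\{|Du|>1+\varepsilon\}$, where $D^2u\in L^2_{loc}$ genuinely holds. One then lets the kernel parameter tend to zero by dominated convergence and afterwards $\varepsilon\to 0$ by Fatou's lemma, after verifying that the effective coefficient $G_\varepsilon'(u_{x_j})-\beta\,\dfrac{G_\varepsilon(u_{x_j})}{u_{x_j}}\,\dfrac{|u_{x_j}|}{(|u_{x_j}|-1)_+}-2\sigma\,\dfrac{|G_\varepsilon(u_{x_j})|}{|u_{x_j}|}$ stays positive for the absorption parameter $\sigma<\tfrac{1-\beta}{2}$. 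Inserting exactly this layer repairs your argument for~(\ref{mainest1}).

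For (\ref{mainest2}) the paper passes from (\ref{mainest1}) in one line via $(|u_{x_j}|-1)_+\le(|Du|-1)_+$, without isolating the residual set $\{|Du|>1\}\setminus\bigcup_j\{|u_{x_j}|>1\}$ that you single out. Your proposed remedy---rerunning with the profile $(|Du|-1)_+^{1-\beta}\,u_{x_j}/|Du|=\bigl[H_{1-\beta}(Du)\bigr]_j$---is less clean than you suggest: after summing over $j$ the positive term on the left becomes $\mathrm{tr}\bigl(DH_{p-1}(Du)\,D^2u\,DH_{1-\beta}(Du)\,D^2u\bigr)$, and bounding it below through the smallest eigenvalues of the two matrices (which, by the computation underlying Lemma~\ref{lemH}, are $(|Du|-1)_+^{p-1}/|Du|$ and $(|Du|-1)_+^{1-\beta}/|Du|$ near the degeneracy) delivers only $(|Du|-1)_+^{p-\beta}|D^2u|^2/|Du|^2$, which is one power of $(|Du|-1)_+$ short of (\ref{mainest2}). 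The observation that $(|Du|-1)_+^{p-1-\beta}$ is bounded on the residual set does not help either, since what is missing there is control of $|D^2u|^2/|x-y|^{\gamma}$ itself.
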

\begin{proof}
{
By virtue of Corollary \ref{cor1}, we can differentiate the equation in \eqref{Functional} with respect to $x_{j}$ for some $j\in\{1,\dots,n\}$
and then we integrate by parts, thus obtaining 
\begin{equation}
\int_{\Omega}\langle DH_{p-1}(Du)\,Du_{x_j},D\psi\rangle\,dx\,=
\int_{\Omega}f_{x_j}(x,u)\,\psi\,dx \, + \,
\int_{\Omega}f_u(x,u)\,u_{x_j}\,\psi\,dx,\qquad\forall\,\psi\in W_{0}^{1,p}(\Omega).\label{secondvar}
\end{equation}   
Let $G_\varepsilon : \mathbb{R} \to \mathbb{R} $ be defined as {
\begin{align}
G_\varepsilon (s)=
\begin{cases}
   0 \qquad & 0 \le s \le 1+ \varepsilon \\
  2s-2(1+\varepsilon) \qquad & 1+ \varepsilon < s < 1+ 2 \varepsilon \\
   s-1 \qquad & s \ge 1+ 2 \varepsilon 
\end{cases}
\end{align}
and $G_\varepsilon(s)=-G_\varepsilon(-s)$ for $s \le 0$.
Moreover, let  $K_\delta : \mathbb{R} \to \mathbb{R} $ be defined as
\begin{align}\label{Kappa}
K_\delta (s)=
\begin{cases}
   0 \qquad & 0\le s \le \delta \\
  2s- 2 \delta  \qquad & \delta < s < 2 \delta \\
   s \qquad & s \ge 2 \delta 
\end{cases}
\end{align}
and $K_\delta(s)=-K_\delta(-s)$ for $s \le 0$.}

  Fix a ball $B_R \Subset \Omega$ and consider a radius $0 <\rho<R$, a positive cut-off function $\eta \in C_0^{\infty}(B_R)$, with $\eta=1$ on $B_\rho$, $0 \leq \eta \leq 1, \, |D\eta| \leq \frac{c}{R-\rho}.$ Set 
  $$\psi (y)= \frac{G_\varepsilon(u_{x_j})}{(|u_{x_j}|-1)_+^\beta}  \frac{K_\delta (|x-y|)}{|x-y|^{\gamma +1}} \eta^2 (y).$$
We compute
\begin{align*}
    D_y \psi (y)&= \frac{G'_\varepsilon(u_{x_j})}{(|u_{x_j}|-1)_+^\beta} {Du_{x_j}}\frac{K_\delta (|x-y|)}{|x-y|^{\gamma +1}} \eta^2 (y) \\
    & \qquad- \beta \frac{G_\varepsilon(u_{x_j})}{(|u_{x_j}|-1)_+^{\beta+1}} \frac{|u_{x_j}|}{u_{x_j}} {Du_{x_j}}\frac{K_\delta (|x-y|)}{|x-y|^{\gamma +1}} \eta^2 (y) \\
     & \qquad + \frac{G_\varepsilon(u_{x_j})}{(|u_{x_j}|-1)_+^\beta}  \frac{K_\delta (|x-y|)}{|x-y|^{\gamma +1}} 2\eta (y) D \eta(y) \\
     & \qquad + \frac{G_\varepsilon(u_{x_j})}{(|u_{x_j}|-1)_+^\beta}  D_y \left(\frac{K_\delta (|x-y|)}{|x-y|^{\gamma +1}} \right) \eta^2 (y).
\end{align*}
Using $\psi$ as test function in \eqref{secondvar}, we get
\begin{align}\label{SV}
   & \int_{\Omega}\langle DH_{p-1}(Du)\,Du_{x_j},D u_{x_j}\rangle  \frac{T_\varepsilon(u_{x_j})}{(|u_{x_j}|-1)_+^\beta}  \frac{K_\delta (|x-y|)}{|x-y|^{\gamma +1}} \eta^2 (y) \,dy\, \notag \\
   & \qquad +2 \int_{\Omega}\langle DH_{p-1}(Du)\,Du_{x_j},D \eta \rangle  \frac{G_\varepsilon(u_{x_j})}{(|u_{x_j}|-1)_+^\beta}  \frac{K_\delta (|x-y|)}{|x-y|^{\gamma +1}} \eta (y) \,dy\, \notag \\
    & \qquad +\int_{\Omega}\langle DH_{p-1}(Du)\,Du_{x_j},D \left(\frac{K_\delta (|x-y|)}{|x-y|^{\gamma +1}} \right) \rangle  \frac{G_\varepsilon(u_{x_j})}{(|u_{x_j}|-1)_+^\beta}   \eta^2 (y) \,dy\, \notag \\
   &=\,\int_{\Omega}f_{x_j}(x,u)\,\frac{G_\varepsilon(u_{x_j})}{(|u_{x_j}|-1)_+^\beta} \frac{K_\delta (|x-y|)}{|x-y|^{\gamma +1}} \eta^2 (y)\,dy \notag\\
   & +\,\int_{\Omega}f_u(x,u)\,u_{x_j}\,\frac{G_\varepsilon(u_{x_j})}{(|u_{x_j}|-1)_+^\beta} \frac{K_\delta (|x-y|)}{|x-y|^{\gamma +1}} \eta^2 (y)\,dy  ,
\end{align}
where we denoted
\begin{equation}\label{Te}
    T_\varepsilon (s)= G_\varepsilon'(s)-\beta \frac{G_\varepsilon(s)}{s} \frac{|s|}{(|s|-1)_+}.
\end{equation}
An easy calculation shows that, for $\beta \in [0,1]$, it holds $T_\varepsilon(s)\ge 0.$ 

We write the equality \eqref{SV} as follows
$$I_1 + 2 I_2 + I_3 = I_4+I_5,$$
which yields
\begin{equation}\label{Somma2INt}
    I_1 \le  2 |I_2| + |I_3| + |I_4|+|I_5|.
\end{equation}
Consider the term $I_2$. By using Cauchy-Schwartz's and Young's inequalities, we have
\begin{align}
    2|I_2| &\le 2 \int_{\Omega}|\langle DH_{p-1}(Du)\,Du_{x_j},D \eta \rangle  |\frac{|G_\varepsilon(u_{x_j})|}{(|u_{x_j}|-1)_+^\beta} \frac{|K_\delta (|x-y|)|}{|x-y|^{\gamma +1}} \eta (y) \,dy\, \notag \\
    &  \le 2 \int_{\Omega} \sqrt{\langle DH_{p-1}(Du)\,Du_{x_j},D u_{x_j} \rangle  } \sqrt{\langle DH_{p-1}(Du)\,D_\eta,D \eta \rangle  } \cdot \notag \\
    & \qquad \cdot \frac{|G_\varepsilon(u_{x_j})|}{(|u_{x_j}|-1)_+^\beta} \frac{|K_\delta (|x-y|)|}{|x-y|^{\gamma +1}} \eta (y) \,dy\, \notag \\
    & \le \sigma \int_{\Omega} \langle DH_{p-1}(Du)\,Du_{x_j},D u_{x_j} \rangle \frac{|G_\varepsilon(u_{x_j})|}{(|u_{x_j}|-1)_+^\beta}  \frac{1}{|u_{x_j}|}\frac{|K_\delta (|x-y|)|}{|x-y|^{\gamma +1}} \eta^2 (y) \,dy\,   \notag \\
    & \qquad + c_\sigma \int_{\Omega} \langle DH_{p-1}(Du)\,D \eta ,D \eta \rangle \frac{|G_\varepsilon(u_{x_j})|}{(|u_{x_j}|-1)_+^\beta} |u_{x_j}|\frac{|K_\delta (|x-y|)|}{|x-y|^{\gamma +1}}  \, dy.  \notag 
\end{align}
Using Lemma \ref{lemH}, we infer
\begin{align}
   2|I_2| &\le \sigma  \int_{\Omega} \langle DH_{p-1}(Du)\,Du_{x_j},D u_{x_j} \rangle \frac{|G_\varepsilon(u_{x_j})|}{(|u_{x_j}|-1)_+^\beta}  \frac{1}{|u_{x_j}|}\frac{|K_\delta (|x-y|)|}{|x-y|^{\gamma +1}} \eta^2 (y) \,dy\,   \notag \\
    & \qquad + c_\sigma \int_{\Omega} (|Du|-1)^{p-2}_+ |D \eta|^2 \frac{|u_{x_j}|(|u_{x_j}|-1)_+}{(|u_{x_j}|-1)_+^\beta}  \frac{|K_\delta (|x-y|)|}{|x-y|^{\gamma +1}} dy \notag \\
    &\le \sigma  \int_{\Omega} \langle DH_{p-1}(Du)\,Du_{x_j},D u_{x_j} \rangle \frac{|G_\varepsilon(u_{x_j})|}{(|u_{x_j}|-1)_+^\beta}  \frac{1}{|u_{x_j}|}\frac{|K_\delta (|x-y|)|}{|x-y|^{\gamma +1}} \eta^2 (y) \,dy\,   \notag \\
    & \qquad + c_\sigma \int_{B_R}   \frac{|K_\delta (|x-y|)|}{|x-y|^{\gamma +1}} \,dy,  \label{2I2}
\end{align}
where we also used that $u \in W^{1, \infty},$ the properties of $\eta$ and the fact that $|G_\varepsilon(s)| \lesssim (|s|-1)_+.$

In order to estimate $I_3$, we notice that 
\begin{align}\label{Dy1}
     D \left(\frac{K_\delta (|x-y|)}{|x-y|^{\gamma +1}} \right)  = \left( K'_\delta (|x-y|)-(\gamma+1) \frac{K_\delta (|x-y|)}{|x-y|}\right)\frac{D_y (|x-y|)}{|x-y|^{\gamma +1}}
\end{align}
and 
\begin{equation}\label{Dy2}
    \left\lvert K'_\delta (|x-y|)-(\gamma+1) \frac{K_\delta (|x-y|)}{|x-y|} \right\rvert \le c(\gamma),
\end{equation}
since $ K_\delta$ is Lipschitz continuous such that $0 \le K_\delta'(s) \le 2$ and $ |K_\delta(s)| \lesssim |s|$. 

Arguing as for the integral $I_2$ and using \eqref{Dy1} and \eqref{Dy2}, we obtain
\begin{align}\label{2I3}
|I_3| &\le c(\gamma) \int_{\Omega}|\langle DH_{p-1}(Du)\,Du_{x_j},D_y (|x-y|) \rangle  |\frac{|G_\varepsilon(u_{x_j})|}{(|u_{x_j}|-1)_+^\beta}  \frac{1}{|x-y|^{\gamma +1}} \eta^2 (y) \,dy\, \notag \\
    & \le \sigma \int_{\Omega} \langle DH_{p-1}(Du)\,Du_{x_j},D u_{x_j} \rangle \frac{|G_\varepsilon(u_{x_j})|}{(|u_{x_j}|-1)_+^\beta}  \frac{1}{|u_{x_j}|}\frac{1}{|x-y|^{\gamma}} \eta^2 (y) \,dy\,   \notag \\
    & \qquad + c_\sigma \int_{\Omega} \langle DH_{p-1}(Du)\,D_y (|x-y|) ,D_y (|x-y|) \rangle \frac{|G_\varepsilon(u_{x_j})|}{(|u_{x_j}|-1)_+^\beta} |u_{x_j}|\frac{1}{|x-y|^{\gamma +2}} \eta^2(y) \, dy \notag \\
    & \le \sigma \int_{\Omega} \langle DH_{p-1}(Du)\,Du_{x_j},D u_{x_j} \rangle \frac{|G_\varepsilon(u_{x_j})|}{(|u_{x_j}|-1)_+^\beta}  \frac{1}{|u_{x_j}|}\frac{1}{|x-y|^{\gamma}} \eta^2 (y) \,dy\,   \notag \\
    & \qquad + c_\sigma \int_{\Omega} (|Du|-1)^{p-2}_+  \frac{|u_{x_j}|(|u_{x_j}|-1)_+}{(|u_{x_j}|-1)_+^\beta}  \frac{\eta^2 (y)|}{|x-y|^{\gamma +2}}\, dy \notag \\
    &\le \sigma  \int_{\Omega} \langle DH_{p-1}(Du)\,Du_{x_j},D u_{x_j} \rangle \frac{|G_\varepsilon(u_{x_j})|}{(|u_{x_j}|-1)_+^\beta}  \frac{1}{|u_{x_j}|}\frac{1}{|x-y|^{\gamma}} \eta^2 (y) \,dy\,   \notag \\
    & \qquad + c_\sigma \int_{B_R}   \frac{1}{|x-y|^{\gamma +2}}\, dy. 
\end{align}
Using the property of $\eta$ and the Lipschitz continuity of $f$ with respect to the $x$-variable, we derive
\begin{align}
  |I_4| &\le \int_{\Omega} |f_{x_j}(x,u)|  \frac{|G_\varepsilon(u_{x_j})|}{(|u_{x_j}|-1)_+^\beta} \frac{K_\delta (|x-y|)}{|x-y|^{\gamma +1}} \eta^2 (y) \, dy \notag \\
  & \le c M\int_{\Omega} (|u_{x_j}|-1)_+^{-\beta+1}
  \frac{K_\delta (|x-y|)}{|x-y|^{\gamma +1}} \eta^2 (y) \, dy \notag\\
  & \le c M\int_{B_R}
  \frac{K_\delta (|x-y|)}{|x-y|^{\gamma +1}} \, dy,\label{I_4}
\end{align}
where we have used that $\beta \le 1$ and $|G_\varepsilon(s)| \lesssim (|s|-1)_+$.
The integral $I_5$ can be estimate as follows
\begin{align}
  |I_5| &\le \int_{\Omega} |f_u(x,u)| |u_{x_j}| \frac{|G_\varepsilon(u_{x_j})|}{(|u_{x_j}|-1)_+^\beta} \frac{K_\delta (|x-y|)}{|x-y|^{\gamma +1}} \eta^2 (y) \, dy \notag \\
  & \le cL\int_{\Omega} (|u_{x_j}|-1)_+^{-\beta+1}
  \frac{K_\delta (|x-y|)}{|x-y|^{\gamma +1}} \eta^2 (y) \, dy \notag\\
  & \le cL\int_{B_R}
  \frac{K_\delta (|x-y|)}{|x-y|^{\gamma +1}} \, dy, \label{I_5}
\end{align}
where we have used that $|G_\varepsilon(s)| \lesssim (|s|-1)_+$, the property of $\eta$, $\beta \le 1$, assumption \eqref{F1} {and the Lipschitz continuity of $u$.}

Inserting \eqref{2I2}, \eqref{2I3}, \eqref{I_4} and \eqref{I_5} in \eqref{Somma2INt}, we derive
\begin{align}\label{StimaF}
    & \int_{\Omega}\langle DH_{p-1}(Du)\,Du_{x_j},D u_{x_j}\rangle  \frac{T_\varepsilon(u_{x_j})}{(|u_{x_j}|-1)_+^\beta} \frac{K_\delta (|x-y|)}{|x-y|^{\gamma +1}} \eta^2 (y) \,dy\, \notag \\
     &\le \sigma  \int_{\Omega} \langle DH_{p-1}(Du)\,Du_{x_j},D u_{x_j} \rangle \frac{|G_\varepsilon(u_{x_j})|}{(|u_{x_j}|-1)_+^\beta} \frac{1}{|u_{x_j}|}\frac{|K_\delta (|x-y|)|}{|x-y|^{\gamma +1}} \eta^2 (y) \,dy\,   \notag \\
     & \quad + \sigma  \int_{\Omega} \langle DH_{p-1}(Du)\,Du_{x_j},D u_{x_j} \rangle \frac{|G_\varepsilon(u_{x_j})|}{(|u_{x_j}|-1)_+^\beta}  \frac{1}{|u_{x_j}|}\frac{1}{|x-y|^{\gamma}} \eta^2 (y) \,dy\,   \notag \\
    & \quad + c_\sigma \int_{B_R}   \frac{1}{|x-y|^{\gamma +2}} \,dy + c_\sigma \int_{B_R}   \frac{|K_\delta (|x-y|)|}{|x-y|^{\gamma +1}} \,dy.
\end{align}
We know that $\int_{B_R}|x-y|^{-q} \, dy$ is uniformly bounded for any $q <n$. In particular, since $\gamma < n-2$, this is true for $q=\gamma +1$ and $q=\gamma +2$. Therefore, for any fixed $\varepsilon >0$, since $K_\delta(s)$ converges to $s$ as $\delta \to 0^+$, by dominated convergence theorem, letting $\delta \to 0^+$ in \eqref{StimaF}, we infer
\begin{align*}
    & \int_{\Omega}\langle DH_{p-1}(Du)\,Du_{x_j},D u_{x_j}\rangle  \frac{T_\varepsilon(u_{x_j})}{(|u_{x_j}|-1)_+^\beta}  \frac{1}{|x-y|^{\gamma}} \eta^2 (y) \,dy\, \notag \\
     &\le 2\sigma  \int_{\Omega} \langle DH_{p-1}(Du)\,Du_{x_j},D u_{x_j} \rangle \frac{|G_\varepsilon(u_{x_j})|}{(|u_{x_j}|-1)_+^\beta}  \frac{1}{|u_{x_j}|}\frac{1}{|x-y|^{\gamma}} \eta^2 (y) \,dy\,   \notag \\
    & \quad + c_\sigma \int_{B_R}   \frac{1}{|x-y|^{\gamma +2}} \,dy + c_\sigma \int_{B_R}   \frac{1}{|x-y|^{\gamma }} \,dy.
\end{align*}
Hence, reabsorbing the first integral in the right-hand side into the left-hand side, recalling the definition of the function $T_\varepsilon$ in \eqref{Te} and using Lemma \ref{lemH}, we get
\begin{align}\label{eq:int}
    & \int_{\Omega}\frac{(|Du|-1)_+^{p-1} \,|Du_{x_j}|^2}{|Du|\,(|u_{x_j}|-1)_+^\beta \, |x-y|^{\gamma}}  \notag\\
     & \qquad \cdot
     \left(  G_\varepsilon'(u_{x_j})-\beta \frac{G_\varepsilon(u_{x_j})}{u_{x_j}} \frac{|u_{x_j}|}{(|u_{x_j}|-1)_+}- 2 \sigma \frac{G_\varepsilon(u_{x_j})}{u_{x_j}} \right)
     \eta^2 (y) \,dy\ \le \,C . 
\end{align}
Now, choosing $0 <\sigma < \frac{1-\beta}{2}$, we find that
$$G_\varepsilon'(u_{x_j})-\beta \frac{G_\varepsilon(u_{x_j})}{u_{x_j}} \frac{|u_{x_j}|}{(|u_{x_j}|-1)_+}- 2 \sigma \frac{G_\varepsilon(u_{x_j})}{u_{x_j}}>0.$$
Since, for $|u_{x_j}|>1$, as $\varepsilon \to 0^+ $
$$ G_\varepsilon'(u_{x_j})-\beta \frac{G_\varepsilon(u_{x_j})}{u_{x_j}} \frac{|u_{x_j}|}{(|u_{x_j}|-1)_+}- 2 \sigma \frac{G_\varepsilon(u_{x_j})}{u_{x_j}} \to 1-\beta -2 \sigma \frac{(|u_{x_j}|-1)_+}{|u_{x_j}|}. $$
Letting $\varepsilon \to 0^+$ in \eqref{eq:int}, by Fatou's Lemma 
 and using the boundedness of $Du$, we infer
\begin{align}
    & \int_{B_\rho \cap \{|u_{x_j}|>1\}}\frac{(|Du|-1)_+^{p-1} \,|Du_{x_j}|^2}{(|u_{x_j}|-1)_+^\beta \, |x-y|^{\gamma}}  \eta^2(y) \, dy \le \, C, \notag
\end{align}
for a positive constant $C=C(\rho,R,n,p,\beta,\alpha,\gamma,M,L, \Vert u \Vert_{W^{1,\infty}})$ independent of $x$. Therefore, using the fact that $\eta =1$ on $B_\rho$, we have
\begin{align}
    & \sup_{x \in \Omega} \int_{  B_\rho \cap \{|u_{x_j}|>1\}}\frac{(|Du|-1)_+^{p-1} \,|Du_{x_j}|^2}{(|u_{x_j}|-1)_+^\beta \, |x-y|^{\gamma}}   \, dy \le \, C. \notag
\end{align}
In particular, since $|Du| \ge |u_{x_j}|$ and $0 \le \beta \le 1$, we can conclude that
\begin{equation*}
     \sup_{x \in \Omega }\int_{ B_\rho \cap \{|Du|>1\}}\frac{(|Du|-1)_+^{p-1-\beta} \,|D^2u|^2}{ |x-y|^{\gamma}}  \, dy \le \, C. \qedhere
\end{equation*}
}
\end{proof}

\section{Weak Comparison Principle}

{We point out that the second order regularity results proved in Section \ref{SecOrd} hold without any sign assumption on the right-hand side $f$. Now, assuming that $f$ has a sign, as a consequence of Theorem \ref{mainthm}, we obtain the summability properties of $\frac{1}{(|Du|-1)_+}$.}

\begin{prop}\label{invp}
 Let $u \in W^{1,\infty}_{loc}(\Omega)$ be a local weak solution of \eqref{Functional}. Assume that $f$ {is a positive function } satisfying \eqref{F1} and \eqref{F2}. For $\gamma < n-2$ ($\gamma=0$ if $n=2$) and $t \in [0,\frac{p-2}{p}]$, then the following estimate
\begin{align}
 {  \sup_{x \in \Omega} \int_{\Omega'}  \frac{1}{(|Du|-1)_+^{pt}}  \frac{1}{|x-y|^{\gamma }}  \, dy \le C.} \notag
\end{align}
 holds for every {open set $\Omega' \Subset \Omega $}, where $C=C(n,p,t,\gamma, M,L,\Vert u \Vert_{W^{1,\infty}},\Omega')$ is a positive constant. {Moreover, we have that $$|\{x \in \Omega' \,:\,|Du|\le 1 \}| =0.$$}
\end{prop}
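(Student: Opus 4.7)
The approach is to upgrade the weak equation to a pointwise a.e.\ identity using Corollary~\ref{cor1}, and then close a reverse Cauchy--Schwarz inequality against the weighted second-order estimate of Theorem~\ref{mainthm}. We begin with the ``moreover'' claim. Since $H_{p-1}(Du)\in W^{1,2}_{loc}(\Omega)$ by Corollary~\ref{cor1}, integrating by parts in the weak formulation \eqref{Weaksol} yields the pointwise identity $-\mathrm{div}\,H_{p-1}(Du)=f(x,u)$ a.e.\ in~$\Omega$. On the level set $A:=\{y\in\Omega':|Du(y)|\le 1\}$ the field $H_{p-1}(Du)$ vanishes identically, so the classical Stampacchia property for Sobolev functions on level sets gives $DH_{p-1}(Du)=0$ a.e.\ on $A$, and hence $\mathrm{div}\,H_{p-1}(Du)=0$ a.e.\ on $A$. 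This forces $f(\cdot,u)=0$ a.e.\ on $A$, which, together with the strict positivity of $f$, yields $|A|=0$.

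With $|Du|>1$ a.e.\ on $\Omega'$ in hand, we next derive a crucial pointwise inequality. Since $H_{p-1}$ is the gradient of the convex radial potential $\xi\mapsto\tfrac{1}{p}(|\xi|-1)_+^{p}$, the matrix $DH_{p-1}$ is symmetric, and Lemma~\ref{lemH} gives $|\mathrm{div}\,H_{p-1}(Du)|\le n(p-1)(|Du|-1)_+^{p-2}|D^2 u|$ a.e. Combining this with $f=-\mathrm{div}\,H_{p-1}(Du)$ and a uniform lower bound $f(x,u(x))\ge f_0>0$ on $\overline{\Omega'}$ (which holds by Lipschitz regularity of $u$, continuity of $f$ granted by \eqref{F1}--\eqref{F2}, strict positivity, and compactness of $\overline{\Omega'}$), we obtain
\[
\frac{1}{(|Du|-1)_+^{p-2}}\,\le\,\frac{n(p-1)}{f_0}\,|D^2 u|\qquad\text{a.e.\ on }\Omega'.
\]

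For the endpoint $t=(p-2)/p$ we truncate to $A_\varepsilon:=\{|Du|>1+\varepsilon\}$ so as to work with a priori finite quantities. Let
\[
J_\varepsilon(x):=\int_{\Omega'\cap A_\varepsilon}\frac{dy}{(|Du|-1)^{p-2}\,|x-y|^{\gamma}},
\]
which is bounded by $\varepsilon^{-(p-2)}\int_{\Omega'}|x-y|^{-\gamma}\,dy<\infty$ since $\gamma<n$. Applying the pointwise bound and Cauchy--Schwarz,
\[
J_\varepsilon(x)\,\le\,\frac{n(p-1)}{f_0}\left(\int_{\Omega'\cap A_\varepsilon}\frac{(|Du|-1)^{p-2}|D^2 u|^2}{|x-y|^{\gamma}}\,dy\right)^{1/2} J_\varepsilon(x)^{1/2}.
\]
The first factor on the right is bounded uniformly in $x$ thanks to Theorem~\ref{mainthm} with $\beta=1$ (after covering $\overline{\Omega'}$ by finitely many balls $B_\rho\Subset\Omega$, if needed). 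Absorbing $J_\varepsilon^{1/2}$ and letting $\varepsilon\to 0^+$ by monotone convergence (the ``moreover'' part ensures $A_\varepsilon$ exhausts $\Omega'$ modulo null sets) delivers the estimate at $t=(p-2)/p$.

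The general $t\in[0,(p-2)/p]$ then follows by weighted Hölder interpolation: writing the integrand as $\bigl[(|Du|-1)^{-(p-2)}|x-y|^{-\gamma}\bigr]^{a}\cdot\bigl[|x-y|^{-\gamma}\bigr]^{1-a}$ with $a=pt/(p-2)\in[0,1]$, one interpolates between the endpoint estimate just proved and the trivial bound $\int_{\Omega'}|x-y|^{-\gamma}dy<\infty$. The main obstacle of the scheme is the circular shape of the Cauchy--Schwarz step in which $J_\varepsilon$ appears on both sides; the $\varepsilon$-truncation is precisely the device that makes $J_\varepsilon$ a priori finite and allows absorption, after which passing to the limit completes the argument.
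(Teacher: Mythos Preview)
Your argument is correct and follows a genuinely different route from the paper's. The paper works in the opposite order: it first proves the weighted integrability of $(|Du|-1)_+^{-pt}$ by testing the \emph{undifferentiated} equation \eqref{Weaksol} with
\[
\psi(y)=\frac{1}{[(|Du|-1)_++\varepsilon]^{pt}}\,\frac{K_\delta(|x-y|)}{|x-y|^{\gamma+1}}\,\eta^2(y),
\]
estimating the resulting terms via Young's inequality (with a parameter later absorbed) and \eqref{mainest2}, and sending $\delta,\varepsilon\to 0$; the measure-zero statement then drops out from the uniform bound through the trivial inequality $\int_{B_\rho}[(|Du|-1)_++\varepsilon]^{-pt}\,dy\ge \varepsilon^{-pt}|B_\rho\cap\{|Du|\le 1\}|$. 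You instead dispose of the measure-zero claim first, via a clean Stampacchia argument on the zero level set of $H_{p-1}(Du)\in W^{1,2}_{loc}$, and only afterwards derive the pointwise bound $f_0\le c\,(|Du|-1)_+^{p-2}|D^2u|$ and close by Cauchy--Schwarz against \eqref{mainest2} at the single endpoint $\beta=1$, recovering the full range of $t$ by H\"older interpolation. Your route makes the role of positivity of $f$ in killing the degeneracy set completely transparent and avoids the somewhat delicate test-function computation; the paper's route, in return, treats all $t\in[0,\tfrac{p-2}{p}]$ in a single pass and does not need the ``moreover'' part as an input. Both arguments rest on the same second-order estimate from Theorem~\ref{mainthm} and handle $D^2u$ at the same formal level of rigor.
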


\begin{proof}
Fix $x \in \Omega$ and a ball $B_R \Subset \Omega$ and consider a radius $\rho<R$, a positive cut-off function $\eta \in C_0^{\infty}(B_R)$, with $\eta=1$ on $B_\rho$, $0 \leq \eta \leq 1, \, |D\eta| \leq \frac{c}{R-\rho}.$ Let $K_\delta$ be the function defined in \eqref{Kappa}. For $t=\frac{p-3+\beta}{p} \in [0,\frac{p-2}{p}]$, with $\beta \in [0,1]$, we set 
  \begin{equation}
      \psi (y)= \frac{1}{\left[ \left(|Du|-1 \right)_{+}+\varepsilon \right]^{pt}}  \frac{K_\delta (|x-y|)}{|x-y|^{\gamma +1}} \eta^2 (y). \label{psitest}
  \end{equation}
We note that, since the map $y \mapsto f(y,u(y))$ is continuous and non negative in $B_R$, we have 
\begin{equation}
    f(y,u(y)) \ge c_1>0, \qquad \text{for all} \ y \in B_R. \label{fpositiva}
\end{equation}
Using \eqref{fpositiva} and testing the equation \eqref{Weaksol} with the function $\psi$ defined in \eqref{psitest}, we get
\begin{align}
    c_1 \int_{B_R} \psi(y) \, dy & \le  \int_{B_R} \psi(y)f(y,u(y)) \, dy \notag\\
    & \le  \int_{B_R} \langle H_{p-1}(Du), D \psi(y) \rangle \, dy \notag\\
    & \le -pt \int_{B_R} \langle H_{p-1}(Du), \frac{Du}{|Du|} D^2u \rangle  \left[ \left(|Du|-1 \right)_{+}+\varepsilon \right]^{-pt-1}  \frac{K_\delta (|x-y|)}{|x-y|^{\gamma +1}} \eta^2 (y)\, dy \notag\\
    & \, \quad + 2 \int_{B_R} \langle H_{p-1}(Du), D_y\eta(y)\rangle  \frac{1}{\left[ \left(|Du|-1 \right)_{+}+\varepsilon \right]^{pt}}  \frac{K_\delta (|x-y|)}{|x-y|^{\gamma +1}} \eta (y)  \, dy \notag\\
    & \, \quad + \int_{B_R} \langle H_{p-1}(Du),  D_y \left(\frac{K_\delta (|x-y|)}{|x-y|^{\gamma +1}} \right)  \rangle \frac{1}{\left[ \left(|Du|-1 \right)_{+}+\varepsilon \right]^{pt}} \eta^2(y) \,dy \notag \\
    & = : I_1+I_2+I_3 .\label{eq:prop1} 
\end{align}
Now, we estimate the integral $I_1$. Applying Young's inequality, recalling that $t=\frac{p-3+\beta}{p}$, with $\beta \in [0,1]$, and using estimate \eqref{mainest2}, we infer
\begin{align}
    |I_1| &\le \int_{B_R} (|Du|-1)_+^{p-1}\left[ \left(|Du|-1 \right)_{+}+\varepsilon \right]^{-pt-1} |D^2u|  \frac{1}{|x-y|^\gamma} \eta^2(y) \, dy \notag\\
    & \le \theta \int_{B_R} \frac{1}{\left[ \left(|Du|-1 \right)_{+}+\varepsilon \right]^{pt}}  \frac{1}{|x-y|^\gamma} \eta^2(y) \, dy \notag\\
    & \, \quad + C_\theta\int_{B_R} { \left(|Du|-1 \right)_{+}^{2p-4-pt}} |D^2u|^2 \frac{1}{|x-y|^\gamma} \eta^2(y) \, dy \notag\\
     & {\le \theta \int_{B_R} \frac{1}{\left[ \left(|Du|-1 \right)_{+}+\varepsilon \right]^{pt}}  \frac{1}{|x-y|^\gamma} \eta^2(y) \, dy} \notag\\
   &{  \, \quad + C_\theta\int_{B_R} { \left(|Du|-1 \right)_{+}^{p-1-\beta}} |D^2u|^2 \frac{1}{|x-y|^\gamma} \eta^2(y) \, dy }\notag\\
    &  \le \theta \int_{B_R} \frac{1}{\left[ \left(|Du|-1 \right)_{+}+\varepsilon \right]^{pt}}  \frac{1}{|x-y|^\gamma} \eta^2(y) \, dy \,+ \,C_\theta,
    \label{i_1}
\end{align}
where we also used that $|K_\delta(s)| \le |s|$ {and $\theta \in (0,1)$ will be chosen later.}

Let us consider the term $I_2$. Since $t \leq \frac{p-2}{p}$, we have
\begin{equation}\label{StimaGradient}
    \frac{(|Du|-1)^{p-1}_+}{\left[ \left(|Du|-1 \right)_{+}+\varepsilon \right]^{pt}} \le \frac{[(|Du|-1)_+ + \varepsilon ]^{p-1}}{\left[ \left(|Du|-1 \right)_{+}+\varepsilon \right]^{pt}} \le c(\Vert Du \Vert_{L^{\infty}}).
    \end{equation}
Therefore, we obtain
\begin{align}
    |I_2| & \le 2 \int_{B_R} \frac{(|Du|-1)_+^{p-1}}{\left[ \left(|Du|-1 \right)_{+}+\varepsilon \right]^{pt}} |D \eta| \frac{1}{|x-y|^\gamma} \eta(y) \, dy \notag\\
    & \le c\int_{B_R} \frac{1}{|x-y|^\gamma} \, dy =:c_2 .\label{i_2'}
\end{align}
 We can estimate $I_3$ similarly as for $I_2$. Recalling \eqref{Dy1}, \eqref{Dy2} and \eqref{StimaGradient}, it holds
\begin{align}
    |I_3| & \le c \int_{B_R} \frac{(|Du|-1)_+^{p-1}}{\left[ \left(|Du|-1 \right)_{+}+\varepsilon \right]^{pt}}\frac{1}{|x-y|^\gamma} \eta^2(y) \, dy \notag\\
    & \le c\int_{B_R} \frac{1}{|x-y|^\gamma} \, dy =:c_3 .\label{i_3}
\end{align}
Inserting estimates \eqref{i_1}, \eqref{i_2'} and \eqref{i_3} in \eqref{eq:prop1}, we find that
\begin{align*}
   & c_1 \int_{B_R}  \frac{1}{\left[ \left(|Du|-1 \right)_{+}+\varepsilon \right]^{pt}}  \frac{K_\delta (|x-y|)}{|x-y|^{\gamma +1}} \eta^2 (y) \, dy \\
    & \le \theta \int_{B_R}  \frac{1}{\left[ \left(|Du|-1 \right)_{+}+\varepsilon \right]^{pt}}  \frac{1}{|x-y|^\gamma} \eta^2(y) \, dy \,+ \,C_\theta \,+\,c_2\,+\,c_3.
\end{align*}
Letting $\delta \to 0^+$, by Dominated Convergence Theorem, we get
\begin{align*}
   & c_1 \int_{B_R}  \frac{1}{\left[ \left(|Du|-1 \right)_{+}+\varepsilon \right]^{pt}}  \frac{1}{|x-y|^{\gamma }} \eta^2 (y) \, dy \\
    & \le \theta \int_{B_R}  \frac{1}{\left[ \left(|Du|-1 \right)_{+}+\varepsilon \right]^{pt}}  \frac{1}{|x-y|^\gamma} \eta^2(y) \, dy \,+ \,C_\theta \,+\,c_2\,+\,c_3.
\end{align*}
Hence, choosing $\theta =\frac{c_1}{2}$, we can reabsorb the first integral in the right-hand side into the left-hand side and, recalling that $\eta=1$ on $B_\rho$, we get
\begin{align}
   & \int_{B_\rho}  \frac{1}{\left[ \left(|Du|-1 \right)_{+}+\varepsilon \right]^{pt}}  \frac{1}{|x-y|^{\gamma }} \, dy \le C, \label{eq:prop2}
\end{align}
{
which yields
\begin{equation}
    \dfrac{|B_\rho \cap \{|Du| \le 1 \}|}{\varepsilon^{pt}} \le C, \label{meas}
\end{equation}
where the constant $C$ is independent of both $\varepsilon$ and $x$.
}
Eventually, letting $\varepsilon \to 0^+$ in \eqref{eq:prop2} and \eqref{meas} and then applying a standard covering argument, we complete the proof.
\end{proof}

{In Proposition \ref{invp}, we have proved that the measure of the degeneracy set of the equation \eqref{Functional} is zero in every open set $\Omega'$ compactly supported in $\Omega$. This allows us to establish a weak comparison principle, in the sense that  if $u ,v$ are two local weak solutions of \eqref{Functional} such that $u \le v$ on $\partial \Omega'$,  then $u \le v$ in $\Omega'$. More precisely, we have the following. }

\begin{thm}
     Let $u ,v\in W^{1,\infty}_{loc}(\Omega)$ be local weak solutions of \eqref{Functional}. Assume that $f$ {is a positive function } satisfying \eqref{F1} and \eqref{F2}. Moreover, we suppose that for every $x \in \Omega$
     \begin{equation}
         s \mapsto f(x,s) \quad \textit{is non-increasing}. \label{decrescente}
     \end{equation}
     If $u \le v$ on $\partial \Omega'$, for an open set $\Omega' \Subset \Omega$, then $u \le v$ in $\Omega'$. Moreover, if 
     \begin{equation}
         \{ x \in \Omega : |Du| \le 1 \} \Subset \Omega, \label{degset}
     \end{equation}
     we can choose $\Omega' = \Omega$.
\end{thm}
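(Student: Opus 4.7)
The plan is to run the classical comparison argument, using $w:=(u-v)_+$ as test function in the difference of the two weak equations, and then exploiting Lemma~\ref{ellcond} together with the measure-zero conclusion of Proposition~\ref{invp} to kill the degeneracy of $H_{p-1}$.

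First I set up the test function. Since $u,v\in W^{1,\infty}_{\mathrm{loc}}(\Omega)$ and $u\le v$ on $\partial\Omega'$, the function $w=(u-v)_+$ belongs to $W^{1,p}_0(\Omega')$, so it is an admissible test function for the equations solved by $u$ and $v$ on $\Omega'$. Subtracting the two weak formulations and testing with $w$ yields
\begin{equation*}
\int_{\{u>v\}\cap\Omega'}\!\!\langle H_{p-1}(Du)-H_{p-1}(Dv),\,Du-Dv\rangle\,dx
=\int_{\{u>v\}\cap\Omega'}\!\!\bigl(f(x,u)-f(x,v)\bigr)(u-v)\,dx,
\end{equation*}
because $Dw=(Du-Dv)\,\chi_{\{u>v\}}$ almost everywhere. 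By the monotonicity assumption \eqref{decrescente}, the integrand on the right is non-positive, while by Lemma~\ref{ellcond} the integrand on the left is bounded below by
\begin{equation*}
c(p)\left[\frac{(|Du|-1)_+^{p}}{|Du|^{2}}+\frac{(|Dv|-1)_+^{p}}{|Dv|^{2}}\right]|Du-Dv|^{2}\ \ge\ 0.
\end{equation*}
Hence both sides must vanish and the left-hand integrand is zero a.e.\ on $\{u>v\}\cap\Omega'$.

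At this point the main obstacle is that the weight in the lower bound vanishes on the degeneracy set, so a priori one cannot conclude $Du=Dv$ pointwise. This is exactly where Proposition~\ref{invp} comes in: applied separately to $u$ and $v$ (both being Lipschitz local weak solutions with positive datum $f$), it gives
\begin{equation*}
\bigl|\{x\in\Omega':|Du|\le 1\}\bigr|=\bigl|\{x\in\Omega':|Dv|\le 1\}\bigr|=0.
\end{equation*}
Therefore, at almost every point of $\{u>v\}\cap\Omega'$, the weight is strictly positive, which forces $Du=Dv$ a.e.\ there, i.e.\ $Dw\equiv 0$ in $\Omega'$. Since $w\in W^{1,p}_0(\Omega')$, the Poincaré inequality yields $w\equiv 0$ in $\Omega'$, that is $u\le v$ throughout $\Omega'$.

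For the final statement under \eqref{degset}, the plan is to exhaust $\Omega$ by sets $\Omega'_k\Subset\Omega$ which contain the compact set $\{|Du|\le 1\}$ and shrink toward $\partial\Omega$. On each thin collar $\Omega\setminus\Omega'_k$ the gradient satisfies $|Du|>1$ (and, using the same argument for $v$ or by a separate reduction, $|Dv|>1$ near $\partial\Omega$), so $H_{p-1}$ there behaves like a standard $p$-Laplace-type uniformly elliptic operator; a classical comparison argument on this collar together with the already-proved interior comparison on each $\Omega'_k$ propagates $u\le v$ up to $\partial\Omega$. The delicate point to check in this extension is the propagation of the boundary ordering through $\partial\Omega'_k$, which is precisely where the non-degeneracy granted by \eqref{degset} is used.
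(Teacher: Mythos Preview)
Your argument for the comparison on $\Omega'\Subset\Omega$ is essentially identical to the paper's: same test function $w=(u-v)_+$, same appeal to Lemma~\ref{ellcond} for the lower bound, and same use of Proposition~\ref{invp} to make the weight strictly positive almost everywhere.

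Where you diverge from the paper, and where there is a gap, is in the treatment of the case~\eqref{degset}. The paper's route here is much simpler than your exhaustion/collar scheme: since $\{|Du|\le 1\}\Subset\Omega$, one may apply Proposition~\ref{invp} on any $\Omega''\Subset\Omega$ containing this compact set, and conclude directly that $|\{x\in\Omega:|Du|\le 1\}|=0$. Then the \emph{same} test-function argument runs on all of $\Omega$, because the weight
\[
\frac{(|Du|-1)_+^{p}}{|Du|^{2}}+\frac{(|Dv|-1)_+^{p}}{|Dv|^{2}}
\]
is strictly positive almost everywhere (the first summand alone already is). No collar decomposition and no information on $Dv$ near $\partial\Omega$ are needed.

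Your proposed collar argument has a genuine gap: hypothesis~\eqref{degset} concerns only $Du$, so you do not know that $|Dv|>1$ on $\Omega\setminus\Omega'_k$, and the ``separate reduction'' you mention is never supplied. Without that, the operator restricted to the collar is still degenerate for $v$, so the ``classical comparison argument on this collar'' you invoke is unavailable; and you would need it to obtain $u\le v$ on $\partial\Omega'_k$ before applying the interior result. Replace this with the direct observation above and the proof is complete.
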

\proof Let us consider $w=(u-v)_+$. Since $w \in W^{1,p}_0(\Omega')$, it can be used as test function in \eqref{Weaksol}. Thanks to \eqref{decrescente}, we obtain
\begin{align}
    \int_{\Omega' \cap \{ u>v \}} \langle  H_{p-1}(Du)-H_{p-1}(Dv), Dw\rangle \, dx = \int_{\Omega'\cap \{ u>v \}} (f(x,u)-f(x,v))w \, dx \le 0. \label{stimaconf}
\end{align}
The integral in the left-hand side of \eqref{stimaconf} can be estimated by using inequality \eqref{monprop}, thus getting
\begin{align}
    &\int_{\Omega' \cap \{ u>v \}} |Du-Dv|^2 \left[ \frac{(|Du|-1)_+^p}{|Du|^2} +  \frac{(|Dv|-1)_+^p}{|Dv|^2} \right] \, dx \notag\\
    &\le \int_{\Omega' \cap \{ u>v \}} \langle  H_{p-1}(Du)-H_{p-1}(Dv), Dw\rangle \, dx \le 0 .\notag
\end{align}
Since $$|\{ x \in \Omega' \, : \,|Du| \le 1 \}|=0=|\{ x \in \Omega' \, : \,|Dv| \le 1 \}|$$ by virtue of Proposition \ref{invp}, then we have $Du=Dv$ a.e.\ in $\Omega' \cap \{ u>v \}$, which yields $w=(u-v)_+=0$ in $\Omega' \cap \{ u>v \}$ and so $|\{ u >v \}|=0$, i.e.\ $u \le v$ in $\Omega'$.

\noindent
{Now, if \eqref{degset} is in force, then, by Proposition \ref{invp}, we have that
$$ |\{ x \in \Omega \, : \,|Du| \le 1 \}|=0,$$
which implies that the comparison principle holds in the whole $\Omega$.}
\endproof

\section{Weighted Sobolev Inequality}
In the next theorem we state a weighted Sobolev inequality. 

\begin{thm}\label{WS}
    Let $\Omega $ be a bounded open set. Let $\rho \in L^\infty(\Omega)$ be a non-negative weight such that
    \begin{equation}
        \int_\Omega \frac{1}{ \rho^t \, |x-y|^\gamma} \, dy \le  K,
    \end{equation}
    for some $t >0$ and $\gamma < n$, where $K$ is a positive constant independent of $x$. Assume that $q$ is an exponent satisfying the following conditions:
    \begin{itemize}
        \item[(i)] $q > \frac{n-\gamma}{t}$,
        \item[(ii)] $q >1+\frac{1}{t}$,
       \item[(iii)] $q< \frac{n-\gamma}{t}+n.$ 
    \end{itemize}
    Let $q^*$ be defined by
    \begin{equation}\label{q*}
        \dfrac{1}{q^*}=\dfrac{1}{q} \left( 1+\dfrac{1}{t}-\dfrac{\gamma}{nt} \right)-\dfrac{1}{n}.
    \end{equation}
Then, there exists a positive constant $c=c(n,q,\rho,t,\gamma, K)$   such that for any $u \in W^{1,q}_0(\Omega,\rho)$ it holds
\begin{equation}
    \Vert u \Vert_{L^{q^*}(\Omega)} \le c \, \Vert Du \Vert_{L^{q}(\Omega, \rho)}.
\end{equation}
\end{thm}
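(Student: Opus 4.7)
My plan is to combine the classical pointwise Riesz representation
\[
|u(x)| \le c_n\int_\Omega \frac{|Du(y)|}{|x-y|^{n-1}}\,dy \;=\; c_n\,U_1[|Du|](x)
\]
with Proposition~\ref{rieszthm} and a Hölder inequality bridging the unweighted and weighted norms of $Du$. To justify the representation, one checks that $u$ extended by zero lies in $W^{1,1}_0(\Omega)$: Hölder with exponents $(q, q/(q-1))$ gives $\int_\Omega|Du|\,dy \le \|Du\|_{L^q(\Omega,\rho)}\bigl(\int_\Omega\rho^{-1/(q-1)}\bigr)^{(q-1)/q}$, and $\rho^{-1/(q-1)}\in L^1(\Omega)$ follows from (ii) (which forces $1/(q-1)\le t$) together with $\rho^{-t}\in L^1(\Omega)$---itself a direct consequence of the kernel assumption on the bounded domain.

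Next, introduce the Sobolev-conjugate exponent $s$ of $q^*$ with respect to $U_1$, namely $\tfrac{1}{s} = \tfrac{1}{q^*} + \tfrac{1}{n}$, which by~\eqref{q*} equals $s = ntq/(n(t+1)-\gamma)$. Condition (iii) is equivalent to $s<n$; condition (ii), combined with $\gamma\ge 0$, forces $q>1+1/t\ge 1+(n-\gamma)/(nt)$, equivalent to $s>1$; and condition (i) is equivalent to the non-triviality $q^*>q$ of the embedding. Proposition~\ref{rieszthm}(a) applied with $\alpha=1$ then yields $\|u\|_{L^{q^*}(\Omega)} \le c\,\|Du\|_{L^s(\Omega)}$. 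To bridge to the weighted norm, I would apply Hölder with exponents $(q/s,\,q/(q-s))$ (valid since $s<q$ reduces to $\gamma<n$):
\[
\int_\Omega |Du|^s\,dy \;\le\; \|Du\|_{L^q(\Omega,\rho)}^{\,s} \Bigl(\int_\Omega \rho^{-T}\,dy\Bigr)^{(q-s)/q}, \qquad T \,=\, \tfrac{s}{q-s} \,=\, \tfrac{nt}{n-\gamma}.
\]

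The proof thus reduces to the integrability $\rho^{-T}\in L^1(\Omega)$, which I would obtain by a symmetric rearrangement / layer-cake argument applied to the super-level sets $E_\lambda = \{\rho^{-t} > \lambda\}$: for $x \in E_\lambda$, the estimates $\lambda\int_{E_\lambda}|x-y|^{-\gamma}\,dy \le K$ (from the kernel assumption) and $\int_{E_\lambda}|x-y|^{-\gamma}\,dy \ge c\,|E_\lambda|^{(n-\gamma)/n}$ (from symmetric decreasing rearrangement) combine to give $|E_\lambda| \lesssim \lambda^{-n/(n-\gamma)}$; a layer-cake integration on the bounded domain $\Omega$ then yields $\rho^{-T}\in L^1(\Omega)$. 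The main technical obstacle is precisely the borderline nature of the endpoint exponent $T = nt/(n-\gamma)$: the layer-cake integral for $\rho^{-T}$ sits on the edge of logarithmic divergence, and the strict inequalities in (i) and (iii), together with the boundedness of $|\Omega|$, are exactly what close the estimate.
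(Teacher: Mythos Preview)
Your approach separates the argument into two independent steps---first an unweighted Sobolev embedding $\|u\|_{L^{q^*}}\le c\|Du\|_{L^s}$ via $U_1$, then a H\"older passage from $\|Du\|_{L^s}$ to $\|Du\|_{L^q(\Omega,\rho)}$ requiring $\rho^{-T}\in L^1(\Omega)$ with $T=nt/(n-\gamma)$. The second step is where the argument breaks.

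The rearrangement inequality you invoke points the wrong way. For the radially decreasing kernel $y\mapsto|x-y|^{-\gamma}$, the bathtub principle (or the Riesz rearrangement inequality applied to $\chi_{E_\lambda}$) gives the \emph{upper} bound
\[
\int_{E_\lambda}|x-y|^{-\gamma}\,dy\;\le\;\int_{B(x,r)}|x-y|^{-\gamma}\,dy\;=\;c\,|E_\lambda|^{(n-\gamma)/n},\qquad |B(x,r)|=|E_\lambda|,
\]
not the lower bound you claim; the only general lower bound on a bounded domain is the trivial $\int_{E_\lambda}|x-y|^{-\gamma}\,dy\ge(\mathrm{diam}\,\Omega)^{-\gamma}|E_\lambda|$, which yields merely $|E_\lambda|\lesssim\lambda^{-1}$, i.e.\ $\rho^{-t}\in L^{1,\infty}$. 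This is far from $\rho^{-t}\in L^{n/(n-\gamma)}$, and the latter is exactly the borderline exponent for which $U_{n-\gamma}[\rho^{-t}]\in L^\infty$ would \emph{fail} to be equivalent (just as $W^{1,n}\not\hookrightarrow L^\infty$). Note also that conditions (i)--(iii) constrain only $q$; since $T=nt/(n-\gamma)$ does not involve $q$, they cannot help close the layer-cake integral. The separate integrability $\rho^{-T}\in L^1$ is therefore not available from the hypotheses, and the bridging step collapses.

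The paper avoids this entirely by applying H\"older with exponents $(qt,(qt)')$ \emph{inside} the pointwise representation, simultaneously splitting off $\rho^{-1/q}$ and a piece $|x-y|^{-\gamma/(qt)}$ of the kernel. One factor becomes exactly $\bigl(\int_\Omega \rho^{-t}|x-y|^{-\gamma}\,dy\bigr)^{1/(qt)}\le K^{1/(qt)}$, so the kernel hypothesis is consumed directly; the other factor is a Riesz potential $U_\alpha$ acting on $(|Du|\,\rho^{1/q})^{(qt)'}\in L^{q/(qt)'}$, to which Proposition~\ref{rieszthm} applies. No integrability of a negative power of $\rho$ beyond the kernel hypothesis itself is ever needed.
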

\proof
By a standard density argument, we may assume $u \in \mathcal{C}^1_0(\Omega)$. Therefore, there exists a constant $C_n$, depending only on $n$, such that for any $x \in \Omega$, we have
\begin{align}
    |u(x)|\le & \, C_n \int_\Omega \dfrac{|Du(y)|}{|x-y|^{n-1}} \, dy  \notag\\
    = & \, C_n \int_\Omega \dfrac{|Du(y)| \, \rho^\frac{1}{q}}{|x-y|^{n-1-\frac{\gamma}{qt}}} \dfrac{1}{\rho^\frac{1}{q}\, |x-y|^\frac{\gamma}{qt}} \, dy. \notag
\end{align}
Since $qt>1$ by assumption $(ii)$, we can apply H\"older's inequality, thus getting
\begin{align}
    |u(x)|\le & \, C_n \left(\int_\Omega  \dfrac{1}{\rho^t\, |x-y|^\gamma} \, dy \right)^\frac{1}{qt} \left( \int_\Omega \dfrac{\left(|Du(y)| \, \rho^\frac{1}{q} \right)^{(qt)'}}{|x-y|^{\left(n-1-\frac{\gamma}{qt}\right){(qt)'}}}  \,dy\right)^\frac{1}{(qt)'}, \label{stimau}
\end{align}
where $(qt)'$ denotes the conjugate exponent of $qt$.\\
By assumptions $(ii)$ and $(iii)$, it holds $n-1-\frac{\gamma}{qt}> 0 $. Then, we define $\alpha$ so that $$n-\alpha=\left(n-1-\frac{\gamma}{qt}\right)(qt)',$$ and, by virtue of assumption $(i)$, we derive that $\alpha >0$.
Thus, estimate \eqref{stimau} becomes
\begin{equation}
    |u(x)| \le C_n K^\frac{1}{qt} \left|U_\alpha \left[\left(|Du(y)| \, \rho^\frac{1}{q} \right)^{(qt)'} \right] \right|^\frac{1}{(qt)'}. \label{upunt}
\end{equation}
Since $|Du(y)| \, \rho^\frac{1}{q} \in L^q(\Omega)$, then $(|Du(y)| \, \rho^\frac{1}{q} )^{(qt)'} \in L^\frac{q}{(qt)'}(\Omega)$, where $\frac{q}{(qt)'}>1$ by assumption $(ii)$.
Now, we have that $\frac{q}{(qt)'}>\frac{n}{\alpha}$, since assumption $(iii)$ is in force. Hence, denoting with $r >1$ the exponent defined by the relation
\begin{equation}
    \dfrac{1}{r}= \dfrac{(qt)'}{q}-\dfrac{\alpha}{n}, \label{expr}
\end{equation}
by Proposition \ref{rieszthm}, we have that $$U_\alpha \left[\left(|Du(y)| \, \rho^\frac{1}{q} \right)^{(qt)'} \right]  \in L^r(\Omega)$$
and for every $m \ge (qt)'$, by estimate \eqref{upunt}, we have
\begin{equation*}
    \Vert u \Vert_{L^m(\Omega)} \le C_n K^\frac{1}{qt} \Vert U_\alpha [(|Du(y)| \, \rho^\frac{1}{q} )^{(qt)'} ] \Vert_{L^\frac{m}{(qt)'}(\Omega)}^\frac{1}{(qt)'}. 
\end{equation*}
Recalling the definition of $q^*$ at \eqref{q*} and of $r$ at \eqref{expr}, we have that $r(qt)'=q^*$. Choosing $m=r (qt)'=q^*$, by Proposition \ref{rieszthm}, we get
\begin{align*}
    \Vert u \Vert_{L^{q^*}(\Omega)} \le & \,C_n K^\frac{1}{qt} \Vert U_\alpha [(|Du(y)| \, \rho^\frac{1}{q} )^{(qt)'} ] \Vert_{L^r(\Omega)}^\frac{1}{(qt)'} \notag\\
    \le & \, C_n K^\frac{1}{qt} \Vert (|Du(y)| \, \rho^\frac{1}{q} )^{(qt)'} \Vert_{L^\frac{q}{(qt)'}(\Omega)}^\frac{1}{(qt)'} = \, C_n K^\frac{1}{qt} \Vert Du \Vert_{L^q(\Omega,\rho)}. 
\end{align*}
This concludes the proof.
\endproof

\begin{rmk}
    We notice that, by condition $(i)$ in Theorem \ref{WS}, the exponent $q^*$ defined at \eqref{q*} is such that $q^* \ge q$.
\end{rmk}

\vskip0.5cm

\noindent {{\bf Acknowledgements.} 
The authors are members of the Gruppo Nazionale per l’Analisi Matematica,
la Probabilità e le loro Applicazioni (GNAMPA) of the Istituto Nazionale di Alta Matematica (INdAM). The authors have  been supported through the INdAM - GNAMPA 2025 Project "Regolarità di soluzioni di equazioni paraboliche a crescita nonstandard degeneri" (CUP: E5324001950001).
In addition S. Russo has also been supported through the project: Sustainable Mobility Center (Centro Nazionale per la Mobilità Sostenibile – CNMS) - SPOKE 10.

\end{document}